\def\eu{\mathfrak}
\def\ma{\mathbb}
\def\mc{\mathcal}
\def\S#1{S_{\infty}(#1)}
\def\e#1{e_{\infty}(#1)}
\def\f#1{f_{\infty}(#1)}
\def\p{{\mathcal P}_{\infty}}
\def\g#1{#1_{{\eu {ge}}}}
\def\cicl#1#2{k(\Lambda_{#1^{#2}})}
\def\F{{\ma F}_q}
\def\fin{\hfill\qed\bigskip}
\def\Witt#1{\stackrel{_{\bullet}}{#1}}
\newcommand{\Gal}{\operatorname{Gal}}
\newcommand{\lcm}{\operatorname{lcm}}
\newcounter{bean}
\def\l{
\begin{list}
{\rm{(\alph{bean}).-}}{\usecounter{bean}
\setlength{\labelwidth}{0.8in}
\setlength{\labelsep}{0.3cm}
\setlength{\leftmargin}{1cm}}}
\numberwithin{equation}{section}
\newtheorem{theorem}{Theorem}[section]
\newtheorem{proposition}[theorem]{Proposition}
\newtheorem{remark}[theorem]{Remark}
\newtheorem{remarks}[theorem]{Remarks}
\newtheorem{corollary}[theorem]{Corollary}
\newtheorem{example}[theorem]{Example}
\title[Genus fields of finite abelian extensions]
{Genus fields of finite abelian extensions}
\author[J. Barreto]{Jonny Fernando Barreto--Casta\~neda}
\address{Departamento de Control Autom\'atico\\
Centro de Investigaci\'on y de Estudios Avanzados del I.P.N.}
\email{jbarreto@ctrl.cinvestav.mx}
\author[C. Montelongo]{Carlos Montelongo--V\'azquez}
\address{Departamento de Control Autom\'atico\\
Centro de Investigaci\'on y de Estudios Avanzados del I.P.N.}
\email{cmontelongo@ctrl.cinvestav.mx}
\author[C. Reyes]{Carlos Daniel Reyes--Morales}
\address{Departamento de Control Autom\'atico\\
Centro de Investigaci\'on y de Estudios Avanzados del I.P.N.}
\email{mcenigm@gmail.com}
\author[M. Rzedowski]{Martha Rzedowski--Calder\'on}
\address{Departamento de Control Autom\'atico\\
Centro de Investigaci\'on y de Estudios Avanzados del I.P.N.}
\email{mrzedowski@ctrl.cinvestav.mx}
\author[G. Villa]
{Gabriel Villa--Salvador}
\address{
Departamento de Control Autom\'atico\\
Centro de Investigaci\'on y de Estudios Avanzados del I.P.N.}
\email{gvillasalvador@gmail.com, gvilla@ctrl.cinvestav.mx}
\subjclass[2010]{Primary 11R58; Secondary 11R60, 11R29}
\keywords{Global function fields, ramification,
genus fields, abelian $p$--extensions}
\date{October 21st., 2017}
\begin{document}

\begin{abstract}

In this paper we find the genus field of finite abelian
extensions of the global rational function field. 
We introduce the term conductor of constants for these
extensions and determine it in terms of other invariants.
We study the particular
case of finite abelian $p$--extensions and
give an explicit description of their genus field.

\end{abstract}

\maketitle

\section{Introduction}\label{S1}

It was C. F. Gauss \cite{Gau1801} the first one to consider what
now is known as the {\em genus field}. The work of Gauss
was in the
context of binary quadratic forms. Later on this
concept was translated into the context 
of quadratic number fields. In this way,
originally, the definition of
genus field was given for a quadratic extension of ${\ma Q}$. 
We have that for a quadratic number field $K$, the Galois group of
$K_{{\eu {ge}}}/K$, $\g K$ denoting the genus field of $K$,
is isomorphic to the maximal subgroup of 
exponent $2$ of the
ideal class group of $K$. It was proved by Gauss
that if $s$ is the number of different positive finite
rational primes dividing the discriminant $\delta_K$ of a quadratic
number field $K$, then the $2$--rank of the class group of $K$
is $2^{s-2}$ if $\delta_K>0$ and there exists a prime $p\equiv
3\bmod 4$ dividing $\delta_K$ and $2^{s-1}$ otherwise.

Genus theory using class field theory was introduced
by H. Hasse \cite{Has51} for the special case of
quadratic number fields. Hasse translated Gauss' genus theory
using characters. H. W. Leopoldt \cite{Leo53}
generalized the results of Hasse determining
the genus field $\g K$ of an absolute abelian
number field $K$. Leopoldt used {\em Dirichlet characters}
to develop genus theory of absolute 
abelian extensions and related the theory
of Dirichlet characters to the arithmetic of $K$. 

The concept of genus fields for an arbitrary
finite extension of the field of rational numbers was
introduced by A. Fr\"ohlich \cite{Fro59-1, Fro59-2, Fro83}. 
Fr\"ohlich defined the genus field $\g K$ of an arbitrary
finite number field $K/{\ma Q}$ as $\g K:= Kk^{\ast}$ where
$k^{\ast}$ is the maximal abelian number field such that $Kk^{\ast}/
K$ is unramified. We have that $k^{\ast}$ is the maximal abelian
number field contained in $\g K$. The degree $[\g K:K]$ is called
the {\em genus number} of $K$ and the Galois group $\Gal(
\g K/K)$ is called the {\em genus group} of $K$.

We have that if $K_H$ denotes the {\em Hilbert class field} of $K$, 
then $K\subseteq \g K\subseteq K_H$ and $\Gal(K_H/K)$ is
isomorphic to the class group $Cl_K$ of $K$. The
genus field $\g K$
corresponds to a subgroup $G_K$ of $Cl_K$, that is,
$\Gal(\g K/K)\cong Cl_K/G_K$. The subgroup $G_K$ is called
the {\em principal genus} of $K$ and $|Cl_K/G_K|$ is equal to
the genus number of $K$.

X. Zhang \cite{Xia85} gave a simple expression
of $K_{{\eu {ge}}}$ for any abelian extension $K$ 
of ${\ma Q}$ using Hilbert ramification theory. 
M. Ishida \cite{Ish76} described the {\em narrow genus field}
$\g K$ of any finite extension of ${\ma Q}$. That is, 
Ishida allowed ramification at the infinite primes. 
Given a number field $K$, Ishida found two
abelian number fields $k_1^{\ast}$ and $k_2^{\ast}$ 
such that $k^{\ast}=k_1^{\ast}k_2^{\ast}$ and
$k_1^{\ast}\cap k_2^{\ast}={\ma Q}$. The field $k_1^{\ast}$
is related to the finite primes $p$ such that
at least one prime in $K$ above $p$ is tamely ramified.

We are interested in genus theory for global function fields.
There is no direct proper notion of
Hilbert class field because, since all the constant field extensions are abelian
and unramified, the maximal constant extension is infinite
abelian and unramified. On the other
extreme, if the class number of a congruence function
field $K$ is $h_K$ then there are exactly $h:=h_K$ abelian
extensions $K_1,\ldots, K_h$ of $K$ such that $K_i/K$ are maximal
unramified with exact field of constants of each $K_i$ the same
as the one of $K$, ${\ma F}_q$, the finite field of $q$ elements
and $\Gal(K_i/K)\cong Cl_{K,0}$ the group of classes of
divisors of degree zero
(\cite[Chapter 8, page 79]{ArTa67}).

There have been different notions of genus fields according to 
different Hilbert class field definitions. 
M. Rosen \cite{Ros87} gave a definition of Hilbert class fields of $K$,
fixing a nonempty finite set $S_{\infty}$ of prime divisors of $K$.
Using Rosen's definition of Hilbert class field, it is 
possible to give a proper concept
of genus fields along the lines of number fields.

R. Clement \cite{Cle92} found a narrow genus
field of a cyclic extension of $k=\F(T)$ of prime degree $l$ dividing
$q-1$. She used the concept of Hilbert class field similar to that of a quadratic
number field $K$: it is the finite abelian extension of $K$ such
that the prime ideals of the ring of integers ${\mathcal O}_K$
of $K$ splitting there are precisely the principal ideals generated by 
an element whose norm is an $l$--power. 
S. Bae and J. K. Koo \cite{BaKo96}
were able to generalize the results of Clement with the methods
developed by Fr\"ohlich \cite{Fro83}. They defined the narrow genus
field for general global function fields and developed the analogue
of the classical genus theory. 
B. Angl\`es and J.-F. Jaulent
\cite{AnJa2000} used narrow $S$--class groups to establish
the fundamental results, using class field theory, 
for the genus theory of finite extensions of
global fields, where $S$ is a finite set of places.

G. Peng \cite{Pen2003} explicitly described the genus theory for
Kummer extensions $K$ of $k:=\F(T)$ of prime degree $l$,
based on the global function field analogue of the P. E.
Conner and J. Hurrelbrink exact hexagon.
C. Wittman \cite{Wit2007} extended Peng's results to the
case $l\nmid q(q-1)$ and used his results to study the $l$--part
of the ideal class groups of cyclic extensions of prime 
degree $l$ of $k$.
S. Hu and Y. Li \cite{HuLi2010} described explicitly the genus
field of an Artin--Schreier extension of $k$.

In \cite{MaRzVi2013, MaRzVi2015} it was
developed a theory of genus fields of
congruence function fields using Rosen's definition of Hilbert class field.
The methods used there 
were based on the ideas of Leopoldt using Dirichlet
characters and it was given a general description of $\g K$ in terms of
Dirichlet characters. The genus field 
$\g K$ was obtained for an abelian extension $K$ of
$k$. The method was used to give $\g K$ explicitly when
$K/k$ is a cyclic extension of prime degree $l\mid q-1$ (Kummer)
or $l=p$ where $p$ is the characteristic (Artin--Schreier) and
also when $K/k$ is a $p$--cyclic extension (Witt). Later
on, the method was used in \cite{BaRzVi2013} to describe
$\g K$ explicitly when $K/k$ is a cyclic extension of degree
$l^n$, where $l$ is a prime number and $l^n\mid q-1$.

In this paper we consider a finite abelian extension $K/k$.
We find the genus field of $K$ with respect to $k$.
Special consideration is given to
the genus field of a finite abelian $p$--extension 
of $k$, where $p$ is the characteristic.

The study of elementary abelian $p$--extensions, and more
generally abelian $p$--extensions, has been
considered by numerous authors. These extensions
appear in several contexts. In \cite{Ore33} O. Ore considered additive
polynomials using composition as multiplication. 
With this operation these polynomials are known as
{\em twisted polynomials} and this is one of the bases for 
{\em Drinfeld modules}. G. Lachaud \cite{Lau91} obtained 
an analogue of the Carlitz--Uchiyama bound for geometric
BCH codes and some consequences for cyclic codes. His
results are part of the analysis of the $L$--function of Artin--Schreier
extensions. Garcia and Stichtenoth \cite{GarSti91} studied 
field extensions $L/K$ given by an equation of the type
$y^q-y=f(x)\in K(x)$ where $q$ is a power of $p$ and $\F
\subseteq K$. Using a result of E. Kani \cite{Kan85} they obtained
a formula relating the genus of the extension and the genus
of the several subextensions of degree $p$. There are many fields
of this kind having the maximum number of rational places
allowed by Weil's bound, but they proved that fixed $K$,
this number of rational places is asymptotically bad. They also
used these extensions to find a family of fields whose
Weierstrass gap sequences are nonclassical.

In \cite{BaJaRzVi2016} we considered an additive 
polynomial $f(X)$ whose roots
belong to the base field and we proved results analogous to
the ones obtained by Garcia and Stichtenoth. 
More generally, we studied abelian extensions of type 
$C^n_{p^m}$, where $C_j$ denotes a cyclic group of 
order $j$, and such that the base field contains the finite
field $\F$, with $q=p^n$. 
For instance, given an additive polynomial $f(X)$, we have that
if the roots of $f$ are in the base field, any elementary
abelian $p$--extension can be obtained by means of an equation of
the type $f(X)=u$.
Furthermore, all the subextensions of degree $p$ over the
base field can be deduced from the equation $f(X)=u$.

We have studied genus fields in \cite{MaRzVi2013, MaRzVi2015,
MaRzVi2016}. The general result we present here goes along
the lines of the proof we presented in \cite{MaRzVi2013}, but it is much
simpler since now we consider in just one step the tame and the
wild ramification of the infinite prime. 
In \cite{MaRzVi2013} we first studied the case of tame ramification of
the infinite primes and next the general case. It turns out that it is possible
to consider the general case in just one step and in fact this approach
gives the genus field much faster and, 
in a way, more transparent. Furthermore, in
\cite{MaRzVi2013} we restricted ourselves to geometric extensions. Here we 
consider general finite abelian extensions, not necessarily geometric.

 We use this approach to study finite abelian
 $p$--extensions of $k$. Obtaining the genus field of this
family of extensions is much more transparent than the way it was
obtained in \cite{MaRzVi2013}. Our first main result is Theorem \ref{T2.1}.
As a corollary we obtain the general description of
the genus field of abelian $p$--extensions
in Theorem \ref{T2.2}.

Our second main result is the description of
what we call the {\em conductor of
constants} of an abelian extension $K/k$.
The classical Kronecker--Weber Theorem establishes that every
finite abe\-lian extension of ${\ma Q}$, the field of rational numbers, is 
contained in a cyclotomic field. Equivalently, the maximal abelian
extension of ${\ma Q}$ is the union of all cyclotomic
fields. In 1974, D. Hayes \cite{Hay74},
proved the analogous result for rational congruence function fields.
Hayes proved that
the maximal abelian extension of $k$ is the composite of
three linearly disjoint fields: the first one is
the union of all cyclotomic function fields; the second one is
the union of all constant extensions
and the third one is the union of all the subfields of the
corresponding cyclotomic function fields, where the
infinite prime is totally wildly ramified.

Given a finite abelian extension $K/k$, by the Kronecker--Weber
Theorem, using the notations of Section \ref{S2},  we have
$K\subseteq
{_n\cicl N{}}_m$ for some $n,m\in{\ma N}$ and $N\in R_T$.
The minimum $N$ and $n$ can be found by class field theory
by means of the conductor related to the finite primes and the
infinite prime respectively. However $m$ does not belong to
this category. In this paper we define the {\em conductor
of constants} as the minimum $m$ satisfying this
condition and describe $m$ in terms of some other invariants
of the extension. This is given in Theorems \ref{T2.1.AA} 
and \ref{T2.5.AA}.

The third main result is the explicit description of 
genus fields of finite abelian $p$--extensions of rational
function fields in case
we have enough constants. This is Theorem \ref{T3.1}.

To describe the genus fields of finite abelian $p$--extensions
of rational function fields without enough constants, we first prove
a result on the genus field of a composite of finite abelian extensions
of degree relatively prime to the order of the multiplicative
group of the field of constants, 
which shows that the genus field of the composite
is the composite of the
respective genus fields. The description of the genus field
of an arbitrary finite abelian extension of a global
rational function field of degree relatively prime to
the order of the multiplicative group of the field of
constants is the final main result, Theorem \ref{T3.10}.

\section{The genus field}\label{S2}

We will use the following notation. Let $k=k_0(T)$
be a global rational function field
of characteristic $p$, where $k_0={\ma F}_q$. Let $R_T=
{\ma F}_q[T]$ be the polynomial ring. Let $R_T^+$
denote the set of all monic irreducible polynomials in $R_T$.
For $N\in R_T$, $k(\Lambda_N)$ denotes the $N$--th Carlitz
cyclotomic function field. Let $\p$ be the pole of
the principal divisor $(T)$ in $k$,
which we call the {\em infinite prime}. The maximal real subfield $k(
\Lambda_N)^+$ of $k(\Lambda_N)$ is the decomposition
field of the infinite prime. For any field $L$ such that $k\subseteq
L\subseteq k(\Lambda_N)$, the real subfield $L^+$ of $L$
is $L^+:=k(\Lambda_N)^+\cap L$. The general results on cyclotomic
function fields can be consulted in \cite[Chapter 12]{Vil2006}.
Let $K/k$
be a finite abelian extension. From the Kronecker--Weber Theorem, we have
that there exist $n,m\in{\ma N}$ and $N\in R_T$ such that
\[
K\subseteq \ _nk(\Lambda_N)_m:= L_n k(\Lambda_N){\ma F}_{q^m},
\]
where $L_n$ denotes the subfield of $k(\Lambda_{1/T^{n+1}})$ of
degree $q^n$ and $k_m:={\ma F}_{q^m}(T)$ is the extension of
constants of $k$ of degree $m$. We have that $\p$ is totally
and wildly ramified in $L_n/k$. 
We also have that $\p$ is totally inert in $k_m/k$.

For any finite abelian extension $F$ of $k$, $\S F$ denotes the set of
prime divisors of $F$ above $\p$. 
For any finite abelian field extension $E/F$, let
$\e {E/F}$, $\f{E/F}$ and $h_{\infty}(E/F)$ denote the ramification 
index, the inertia degree and the decomposition number
of $\S F$ in $E$ respectively. For $P\in R_T^+$, $e_P(E/F)$ denotes
the ramification index of any prime in $F$ above $P$ in $E/F$. 
For any extension $F/k$, let $\g F$
denote the genus field of $F$ over $k$ as presented in the
introduction with $S=\S F$. When $F/k$ is a finite abelian extension, $\g F$ is the
maximal abelian extension contained in the Hilbert class field of $F$. 
The symbol $C_d$ will denote the cyclic group of $d$ elements.

For any field $F$, $W_v(F)$ denotes the ring of Witt vectors of
length $v$. The Witt operations will be denoted by $\Witt +$
and $\Witt -$.

Let $M:=L_nk_m$. Then 
\begin{gather}\label{Eq1}
\e{M/k}=q^n, \quad \f{M/k}=m  \quad \text{and} \quad h_{\infty}(M/k)=1.
\end{gather}
We have $M\cap k(\Lambda_N)=k$. The general results on genus fields
needed along this paper, can be found in \cite{MaRzVi2013, MaRzVi2015}.

First, we present a new proof of the fact that if $K\subseteq \cicl N{}$,
then $\g K\subseteq \cicl N{}$.

\begin{theorem}\label{T2.1-1} Let $k\subseteq  K\subseteq
k(\Lambda_N)$ for some $N\in R_T^+$. Then $\g K
\subseteq k(\Lambda_N)$. Furthemore, if the group of
Dirichlet characters of $K$ is $X$ and if $L$ 
is the field associated to $Y=\prod_{P\in R_T^+}
X_P$, then 
\[
\g K= K L^+.
\]
\end{theorem}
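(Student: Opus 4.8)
The plan is to work entirely inside the cyclotomic function field $k(\Lambda_N)$ and exploit the Galois correspondence between subfields of $k(\Lambda_N)$ and groups of Dirichlet characters. First I would recall that, since $N \in R_T^+$ is irreducible, $\Gal(k(\Lambda_N)/k) \cong (R_T/N)^\ast$ is cyclic, and its character group $X_{k(\Lambda_N)}$ is identified with the group of Dirichlet characters modulo $N$ (with conductor a power of $P=N$). The subfield $K$ corresponds to a subgroup $X$ of this character group. Every prime $P' \in R_T^+$ different from $N$ is unramified in $k(\Lambda_N)/k$; only $P=N$ and the infinite prime $\p$ can ramify. So the only sources of ramification in $K/k$ that need to be ``removed'' to reach an unramified extension are the prime $N$ itself and $\p$.

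The heart of the argument is the standard dictionary between ramification and conductors of Dirichlet characters: for a prime $P$, the decomposition/inertia behaviour of $P$ in the field associated to a character group is governed by the local components $\chi_P$ (the $P$-part of $\chi$ in its decomposition into prime-power-conductor characters). Concretely, the inertia group at $P$ of the field of a group $Z$ of characters corresponds, via duality, to $\prod_{\chi \in Z} \chi_P$ restricted appropriately; the field is unramified at $P$ precisely when every $\chi \in Z$ is unramified at $P$, i.e. $\chi_P = 1$. Thus $Y := \prod_{P \in R_T^+} X_P$ is exactly the smallest character group containing $X$ all of whose characters have trivial $P$-component for every finite prime $P$ — equivalently, the characters in $Y$ are ``supported only at $\p$'' in the sense that their conductor (in the finite part) is trivial. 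Here, since $N$ is the single finite prime that can ramify, $Y$ differs from $X$ only by adjusting the $N$-component. The field $L$ associated to $Y$ then has the property that $L/k$ is unramified at all finite primes, and $KL/K$ is unramified at all finite primes as well. Passing to $L^+ = k(\Lambda_N)^+ \cap L$ removes the ramification at $\p$: since $\p$ is totally ramified in $k(\Lambda_N)/k(\Lambda_N)^+$ and the real subfield is by definition the decomposition (hence inertia) field of $\p$, the extension $L^+/k$ — and then $KL^+/K$ — is unramified everywhere.

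From there the two inclusions: For $\g K \supseteq KL^+$, I would verify $KL^+/K$ is abelian over $k$ (it lies in $k(\Lambda_N)$, which is abelian over $k$) and unramified over $K$ at every prime, including the primes above $\p$, using the conductor–ramification dictionary above at the finite primes and the real-subfield property at $\p$; hence $KL^+ \subseteq K_H$ and, being abelian over $k$... wait — one must be slightly careful: $\g K$ was defined as the maximal subfield of $K_H$ that is abelian \emph{over $k$} (as stated in the excerpt for finite abelian $K/k$), so $KL^+ \subseteq \g K$ follows once $KL^+ \subseteq K_H$ is established. For the reverse inclusion $\g K \subseteq KL^+$, I would first invoke the (already available) results of \cite{MaRzVi2013, MaRzVi2015} giving the general description of $\g K$ in terms of Dirichlet characters: $\g K$ corresponds to a character group built from $X$ by adjoining, at each prime, the characters forced to be unramified — and then check that this group is precisely $Y$ adjusted by the real condition at $\p$, i.e. that the resulting field is $KL^+$. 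Equivalently, one shows any abelian-over-$k$ extension $K'$ of $K$ inside $K_H$ has character group contained in $\langle X, Y^+\rangle$ where $Y^+$ accounts for the $\p$-real part, forcing $K' \subseteq KL^+$.

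The main obstacle I anticipate is the careful bookkeeping at the infinite prime $\p$: unlike the finite prime $N$, where ``unramified'' translates cleanly to ``trivial $N$-component of every character,'' at $\p$ one has to track the interplay between the character-group description, the Carlitz-module ramification filtration at $\p$ in $k(\Lambda_N)/k$, and the real subfield $k(\Lambda_N)^+$; this is exactly the place where the paper emphasizes it is handling tame and wild ramification of $\p$ ``in one step.'' The finite-prime part — showing $Y = \prod_P X_P$ is the right group and that $KL/K$ is unramified outside $\p$ — should be routine from the standard conductor-discriminant machinery for cyclotomic function fields in \cite[Chapter 12]{Vil2006}, together with the fact that only $N$ among finite primes ramifies in $k(\Lambda_N)/k$, which simplifies $Y$ considerably in this irreducible-modulus case.
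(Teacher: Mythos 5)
There are two genuine gaps. The first and more serious one concerns the containment $\g K\subseteq k(\Lambda_N)$, which is half of the statement and is the crux of the paper's proof: you propose to ``work entirely inside $k(\Lambda_N)$,'' but the genus field is defined through the Hilbert class field of $K$, so an arbitrary abelian extension $F/K$, unramified and with $\S K$ completely split, is a priori only contained in some ${}_nk(\Lambda_M)_m$. The wild part $L_n$ is easily excluded ($\p$ is tame in $F/k$ because $K\subseteq k(\Lambda_N)$ and $F/K$ is unramified), but the constant field extension $k_m$ is not: the paper disposes of it by showing that $B=k(\Lambda_M)^{\mathcal I}$, with $\mathcal I$ the inertia group of $\S K$, is the decomposition field of $\S K$ in $k(\Lambda_M)_m/K$ (the primes in $\S B$ have degree one, hence are fully inert in $B_m/B$ and fully ramified in $k(\Lambda_M)/B$), so that $F\subseteq B\subseteq k(\Lambda_M)$. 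Nothing in your proposal performs this step, and your plan for the reverse inclusion $\g K\subseteq KL^+$ --- to invoke the already available results of \cite{MaRzVi2013, MaRzVi2015} --- begs the question, since the theorem is explicitly offered as a new proof of exactly that description.

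The second gap is your characterization of $Y=\prod_{P\in R_T^+}X_P$: you call it the smallest group containing $X$ ``all of whose characters have trivial $P$-component for every finite prime,'' and you conclude that $L/k$ is unramified at all finite primes. Both claims are false --- a Dirichlet character whose every finite local component is trivial has trivial conductor and is trivial, so such an $L$ would equal $k$, contradicting $K\subseteq L$. What is actually true, and what the paper uses, is that $Y_P=X_P$ for every $P$, hence $e_P(L/k)=|Y_P|=|X_P|=e_P(K/k)$, and it is $L/K$ (not $L/k$) that is unramified at every finite prime, $L$ being maximal with this property among subfields of cyclotomic function fields. (Relatedly, if you really take $N\in R_T^+$ irreducible, as your first sentence does, then every $\chi\in X$ equals its own $N$-component, so $Y=X$, $L=K$ and the theorem degenerates to $\g K=K$; the proof and the later applications make clear that general $N\in R_T$ is intended.) Your handling of $\p$ via $L^+$ and the final squeeze $KL^+\subseteq\g K\subseteq L$, using that $L/KL^+$ is totally ramified at $\p$ while $\g K/KL^+$ is unramified, does match the paper's closing argument, but it only becomes available after the two points above are repaired.
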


\begin{proof} Let $F/ K$ be an unramified abelian 
extension so that the elements of $\S K$ are fully
decomposed in $F/K$. In particular $\p$ is tamely
ramified.

By the Kronecker--Weber theorem, we have
$F\subseteq K(\Lambda_M)_m$ for some 
$M\in R_T^+$, $m\in {\ma N}$.

Let ${\mc I}$ be the inertia group of $\S K$ in $k(\Lambda_M)/k$
and let $B=k(\Lambda_M)^{\mc I}$.

\[
\xymatrix{k(\Lambda_M)\ar@{-}[dd]_{\mc I}
\ar@{-}[rr]&&k(\Lambda_M)_m
\ar@{-}[dd]\\ &F\ar@{-}[ddl] |!{[dl];[dr]}\hole\\  
B\ar@{-}[d]\ar@{-}[rr]&&B_m\ar@{-}[d]\\
 K\ar@{-}[rr]\ar@{-}[d]&& K_m\ar@{-}[d]\\
k\ar@{-}[rr]&&k_m}
\]

Since the elements of $\S B$ are of degree $1$, they are fully
inert in $B_m/B$. Furthermore, the elements of $\S B$ are fully
ramified in $k(\Lambda_M)/B$. Now, the elements of $\S K$ are
fully decomposed in $B/K$ so we obtain that $B$ is the
decomposition field of $\S K$ in $k(\Lambda_M)_m/K$.
It follows that $F\subseteq B\subseteq k(\Lambda_M)$.

Let $Z$ be the group of Dirichlet characters associated to
$F$. Since $F/K$ is unramified, it follows that
$X\subseteq Z\subseteq Y$, that is, $F\subseteq L$
since $L$ is the maximal abelian extension contained
in some cyclotomic function field such that $L/K$ is
unramified in the finite primes. In particular, we may
take $M=N$. Therefore $\g K=L^{\mc D}$ where
${\mc D}$ is the decomposition group of $\S K$ in $L/K$.

Now, $\S K$ decompose fully in $K L^+/ K$ since $\p$
decomposes fully in $L^+/k$. Since $L/K$ is unramified,
we have $K L^+\subseteq L$ so that $K L^+/ K$ is
unramified. Hence $K L^+\subseteq \g K$ and we 
obtain that $K L^+\subseteq \g K\subseteq L$.

Finally, let us see that $\S{K L^+}$ is fully ramified in the
extension $L/K L^+$. In fact this follows from the fact that
$L^+\subseteq K L^+ \subseteq L$ and from that 
$\S{L^+}$ is totally ramified in $L/L^+$.
Since $K L^+\subseteq \g K\subseteq L$ and
$\g K/K L^+$ is unramified, it follows that $\g K=K L^+
\subseteq k(\Lambda_N)$. 
\end{proof}

Our first main result is

\begin{theorem}\label{T2.1} With the above notations, let $K/k$
be a finite abelian extension. Let
\[
E:=KM\cap k(\Lambda_N).
\]

Then 
\[
\g K=\g {E^{H_1}} K= (\g E K)^H,
\]
where $H$ is the decomposition group of any prime in $\S K$
in $\g E K/K$, $H_1:= H|_{\g E}$ and $H_2:=H_1|_E$.

Let $d:= \f{EK/K}$. We have $H\cong H_1\cong H_2\cong
C_d$ and $d|q-1$. We also have $\g E K/\g K$ and $EK
/E^{H_2}K$ are extensions of constants of degree $d$.
Finally, the field of constants of $\g K$ is ${\ma F}_{q^t}$, 
where $t$ is the degree of $\S K$ in $K$.
\end{theorem}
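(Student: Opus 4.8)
The overall strategy is to reduce the general abelian extension $K/k$ to the case where the extension lives inside a cyclotomic function field, which was handled in Theorem \ref{T2.1-1}, and to track what extra ramification or constant-field extension the infinite prime contributes. I would begin by analyzing the field $E=KM\cap k(\Lambda_N)$: since $KM\subseteq {_n}k(\Lambda_N)_m=L_nk(\Lambda_N)k_m$ and $M=L_nk_m$ with $M\cap k(\Lambda_N)=k$, the compositum $EK$ should capture exactly the ``cyclotomic part'' of $K$ enlarged by $M$, and one checks $EK=KM\cap k(\Lambda_N)_m$ or a similar identity, so that $EK/E$ is an extension built from $L_n$ and $k_m$. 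The first real step is to compute the behavior of $\S K$ in $EK/K$: because $\p$ is totally wildly ramified in $L_n/k$ and totally inert in $k_m/k$, and because these degrees are coprime to each other and (in the constant case) the relevant residue extension is governed by roots of unity, the decomposition group $H$ of a prime of $\S K$ in $\g E K/K$ will be forced to be cyclic. Concretely, $\g E\subseteq k(\Lambda_N)$ by Theorem \ref{T2.1-1}, so $\g E K\subseteq k(\Lambda_N)\cdot KM$, and the ramification/inertia of $\p$ in this field over $K$ is controlled by the tame part, giving $H\cong C_d$ with $d=\f{EK/K}\mid q-1$ (the divisibility because tame inertia degrees over a prime whose residue field is already ${\ma F}_{q^t}$ are cyclic, and the cyclotomic contribution to inertia degree at $\p$ is a divisor of $q-1$).

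Next I would establish $\g K=(\g E K)^H$. The inclusion $(\g E K)^H\subseteq \g K$: the fixed field $(\g E K)^H$ is, by definition of $H$ as a decomposition group, the maximal subextension of $\g E K/K$ in which $\S K$ splits completely; since $\g E/K\cap\g E$... more precisely since $\g E K/\g K$ is (as we will show) a constant extension hence unramified, and $\g E/E$ is unramified with $\S E$ split, pulling these facts up to $K$ shows $(\g E K)^H/K$ is unramified with $\S K$ split completely, whence $(\g E K)^H\subseteq \g K$. For the reverse inclusion $\g K\subseteq \g E K$: any unramified abelian extension $F/K$ in which $\S K$ splits embeds, by Kronecker--Weber, in $K(\Lambda_{N'})_{m'}$, and repeating the decomposition-field argument from the proof of Theorem \ref{T2.1-1} (taking the inertia group of $\S K$ in the cyclotomic part, then the constant part) shows $F\subseteq K\cdot(\text{something in }k(\Lambda_N))\cdot k_{m'}$; since $\g E$ is the largest such cyclotomic piece and the constant piece is absorbed into $EK$ via $M$, one gets $F\subseteq \g E K$. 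Combining, $\g K\subseteq \g E K$ and $\g K$ is $H$-invariant (as $H$ fixes the decomposition field and $\g K/K$ has $\S K$ split), so $\g K\subseteq(\g E K)^H$, giving equality. The identity $\g K=\g{E^{H_1}}K$ should then follow by noting $E^{H_2}K=(\g E K)^H\cap$ (suitable field) and applying Theorem \ref{T2.1-1} again to $E^{H_1}$, or by a direct Galois-correspondence bookkeeping of the restriction maps $H\to H_1\to H_2$, which are isomorphisms because $\g E/E$ and $EK/K$ share no common subextension with $\g K/K$ beyond the base (the wild and tame parts being coprime, and $\g E\cap M=?$ forcing injectivity of the restrictions).

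The statements that $\g E K/\g K$ and $EK/E^{H_2}K$ are constant extensions of degree $d$ follow once we know $H\cong C_d$ acts by the Frobenius on the residue field: the fixed field $(\g E K)^H=\g K$ sits below $\g E K$ with cyclic quotient $C_d$, and since the extension $\g E K/\g K$ is unramified (both lie over $K$ with $\g E/E$ unramified and the constant part unramified) with $\S{\g K}$ inert of degree $d$, it is exactly the constant extension ${\ma F}_{q^{td}}\cdot\g K$; similarly for $EK/E^{H_2}K$. Finally, the field of constants of $\g K$ is ${\ma F}_{q^t}$: since $\S K$ has degree $t$ in $K$, the constant field of $K$ contains ${\ma F}_{q^t}$ only if... rather, $t$ is by definition $\f{K/k}\cdot[\text{const. field of }K:{\ma F}_q]$ appropriately, and $\g K/K$ is unramified with $\S K$ completely split, so no new constants are introduced at $\p$ (a constant extension would make $\p$ inert, not split); hence the constant field of $\g K$ equals that of $K$ at the level seen by $\p$, namely ${\ma F}_{q^t}$. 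The main obstacle will be the bookkeeping in the second paragraph: verifying that the restriction maps $H\to H_1\to H_2$ are genuinely isomorphisms requires knowing precisely how $\g E$, $M$, and $K$ intersect inside ${_n}k(\Lambda_N)_m$, and this is where one must carefully use $M\cap k(\Lambda_N)=k$ together with the coprimality of the wild part $q^n$, the tame cyclotomic inertia ($\mid q-1$), and the constant degree $m$ — a clean statement of $(\g E K)\cap M$ and $\g E\cap KM$ is the technical heart of the argument.
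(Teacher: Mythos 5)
Your skeleton matches the paper's: pass to $E=KM\cap k(\Lambda_N)$, show $EK/K$ is unramified with $\e{EK/K}\mid\gcd(q^n,q-1)=1$ and $d=\f{EK/K}\mid q-1$, identify $H$ with a cyclic tame inertia-type group, and prove the two inclusions $(\g E K)^H\subseteq \g K$ and $\g K\subseteq \g E K$. The first inclusion and the computation of $d$ are handled essentially as in the paper. But there are two genuine gaps, located exactly where you yourself flag "the technical heart." For the inclusion $\g K\subseteq\g E K$ you write that the cyclotomic piece of an unramified split extension of $K$ lands in $\g E$ "since $\g E$ is the largest such cyclotomic piece and the constant piece is absorbed into $EK$ via $M$" --- but that is the statement to be proved, not a reason. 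The paper's mechanism is to introduce $C:=\g K M\cap k(\Lambda_N)$, prove $\g E\subseteq C$ (which needs the auxiliary identity $\g E=E\,\g E^{H_1}$, a separate lemma absent from your sketch), prove $C/E$ is unramified with $\S E$ split (again via $\gcd(q^n,q-1)=1$ and $C\subseteq k(\Lambda_N)$) so that $C=\g E$ and $\g K M=\g E M$, and finally deduce $\g E\g K=\g E K$ from the fact that in the correspondence between subfields of $M/k$ and of $\g E M/\g E$ one always has $h_{\infty}=1$, so an intermediate step with $e_{\infty}=f_{\infty}=1$ must be trivial. Nothing in your outline substitutes for this chain, and in particular nothing rules out $\g K$ containing a cyclotomic constituent not lying in $\g E K$.

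The second gap is the claim that $\g E K/\g K$ is a constant extension. You infer it from "unramified with $\S{\g K}$ inert of degree $d$," but an unramified abelian extension in which the infinite primes are inert need not be a constant extension (geometric unramified extensions with inert infinite primes exist whenever the class number permits). The paper instead sets $F_1=\g K\cap M$ and $F_2=\g E K\cap M$, shows $\e{F_2/F_1}=1$, and uses that $L_a/k$ is totally ramified at $\p$ to force $F_2k_b=F_1k_b$, i.e.\ $F_2/F_1$ is a constant extension, whence $\g E K=F_2\g K$ is constant over $\g K$. Relatedly, your justification that $H\to H_1\to H_2$ are isomorphisms leaves "$\g E\cap M=?$" unresolved; the paper settles this by computing $E\cap K=\g E\cap K=k(\Lambda_N)\cap K$ directly, which is what makes the restriction maps injective. (A minor further slip: your closing argument that "no new constants are introduced at $\p$" is backwards --- the constant field does grow from that of $K$ up to ${\ma F}_{q^t}$, precisely because a constant extension of degree $r$ splits $\S K$ completely if and only if $r\mid t$.) These are missing ideas rather than routine bookkeeping, so the proposal as it stands does not yet constitute a proof.
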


\begin{proof}
The proof that the field of constants of $\g K$ is ${\ma F}_{q^t}$
is the same as the one in \cite[Lemma 4.1]{MaRzVi2013}.
We repeat the argument for the sake of completeness. Let
$K_r$ be the extension of constants of $K$ of degree $r$.
Since the degree of any element of $\S K$ is $t$, the elements
of $\S K$ decompose into $\gcd(t,r)$ elements of $K_r$. Therefore
the elements of $\S K$ decompose fully if and only if
$\gcd(t,r)=r$ if and only if $r|t$. The assertion follows.

Since $k(\Lambda_N)\cap M=k$ and $E=KM\cap k(\Lambda_N)$,
from the Galois correspondence, between $k(\Lambda_N)/k$
and $k(\Lambda_N)M/M$, $E$ corresponds to $KM$. Hence
$KM=EM$ corresponds to $E$. Thus
\begin{gather*}
KM=EM.
\end{gather*}

\[
\xymatrix{
k(\Lambda_N)\ar@{-}[d]\ar@{-}[rrr]&&&k(\Lambda_N)M\ar@{-}[d]\\
E\ar@{-}[rrr]\ar@{-}[dd]&&&KM=EM\ar@{-}[ddd]\\
&&K\ar@{-}[ru]\ar@{-}[ddl]\\
E\cap K\ar@{-}[rru]\ar@{-}[d]\\
k\ar@{-}[r]&K\cap M\ar@{-}[rr]&&M
}
\]

Now $E\cap K\subseteq \g E\cap K\subseteq k(\Lambda_N)\cap K
= (KM\cap k(\Lambda_N))\cap k(\Lambda_N)\cap K=
E\cap k(\Lambda_N)\cap K=E\cap K$. Therefore
\begin{gather*}
E\cap K=\g E\cap K=k(\Lambda_N)\cap K.
\end{gather*}

We have
$[E:k]=[EM:M]= [KM:M]=[K:K\cap M]$. Thus
\begin{gather}\label{Eq4}
[K:k]=[E:k][K\cap M:k].
\end{gather}

Next, we will prove that $EK/K$ is unramified. First note
that $E\subseteq EK\subseteq EKM=E\cdot EM=EM$. In 
the extension $M/k$, $\p$
is the only ramified prime. Hence in $KM/E$ the only 
possible ramified primes are those in $\S E$. We also have
that in the extension $KM/K$ the only possible ramified primes are
the elements of $\S K$ and since $K\subseteq EK\subseteq
EM=KM$, the only possible ramified primes in $EK/K$
are those in $\S K$.
\[
\xymatrix{
E\ar@{-}[rr]\ar@{-}[dd]&&EK\ar@{-}[dl]\ar@{-}[r]&EM=KM
\ar@{-}[dll]\ar@{-}[ddd]\\
&K\ar@{-}[dl]\\
E\cap K\ar@{-}[d]\\
k\ar@{-}[rrr]&&&M
}
\]

From (\ref{Eq1}) we have
\[
e_{\infty}(EK/K)\mid e_{\infty}(M/K\cap M)\quad \text{and}\quad
e_{\infty}(M/K\cap M)\mid e_{\infty}(M/k)=q^n.
\]

On the other hand, we have
\begin{gather*}
e_{\infty}(EK/K)\mid e_{\infty}(E/E\cap K) \quad \text{and} \quad
e_{\infty}(E/E\cap K)\mid e_{\infty}(k(\Lambda_N)/k)=q-1.
\intertext{Thus}
e_{\infty}(EK/K)\mid \gcd(q^n,q-1)=1
\end{gather*}
and $EK/K$ is unramified.

Now, we have that
\[
e_{\infty}(EK/K)f_{\infty}(EK/K)\mid e_{\infty}(E/E\cap K)
f_{\infty}(E/E\cap K),
\]
and $\e{EK/K}=1$, $\f{E/E\cap K}=1$. Therefore
$f_{\infty}(EK/K)\mid e_{\infty}(E/E\cap K)$ and $e_{\infty}(
E/E\cap K)\mid q-1$. Thus $f_{\infty}(EK/K)\mid q-1$.

Therefore we have that $EK/K$ is unramified,
the inertia degree of $\S K$ in $EK/K$ is 
$d=\f{EK/K}$ and $d\mid q-1$.
Since $\g E/E$ is unramified and $\S E$ decomposes fully
in $\g E/E$, the same holds in $\g EK/EK$. In this way we
obtain that $\g E K/K$ is an unramified extension and the 
inertia degree of $\S K$ is $d$.

Recall that $H$ is the decomposition group of any prime
in $\S K$ in $\g E K/K$ and
let $H_1:=H|_{\g E}$. Observe that
$|H|=d$. Since $\g E\cap K=E\cap K$, from the
Galois correspondence we obtain that $H\cong H_1$, $|H|=|H_1|$
and $\g E^{H_1} K=(\g E K)^H$. Analogously, 
$H_2\cong H_1$. Furthermore, $H_1\subseteq I_{\infty}(
k(\Lambda_N)/k)\cong C_{q-1}$, where $I_{\infty}$ denotes
the inertia group of $\p$. Therefore $H$ is a cyclic group, $H\cong
H_1\cong H_2\cong C_d$.

Since $\S K$ decomposes fully in $\g E^{H_1}K/K$, it follows that
\begin{gather*}
\g E^{H_1} K\subseteq \g K.
\end{gather*}

Let $E_1:=E \g E^{H_1}\subseteq \g E$. Now $H_1\subseteq
I_{\infty}(E/E\cap K)$, so $\S {\g E^{H_1}}$ is fully ramified in 
$\g E/\g E^{H_1}$. Therefore $\S {E_1}$ is fully ramified in 
$\g E/E_1$. On the other hand $\S E$ decomposes fully in $\g E/E$.
Hence $\S {E_1}$ decomposes fully in $\g E/E_1$. That is, $\S {E_1}$
ramifies and decomposes fully in $\g E/E_1$. Therefore
\begin{gather*}
\g E=E_1=E\g E^{H_1}.
\intertext{It follows that}
(\g E K)^H=\g E^{H_1} K\subseteq \g K \quad \text{and} \quad 
E\g E^{H_1}=\g E.
\end{gather*}

To prove the other containment, we define $C:=\g K M\cap k(\Lambda_N)$.
We have
\begin{gather*}
E\subseteq EM=KM\subseteq \g K M,\quad E\subseteq k(\Lambda_N).
\intertext{Therefore}
E\subseteq \g KM\cap k(\Lambda_N)=C,\quad \text{that is}\quad E\subseteq C.
\end{gather*}

Furthermore, $\g E^{H_1}\subseteq \g E^{H_1} K
\subseteq \g K\subseteq \g K M$ and $\g E^{H_1}\subseteq \g E
\subseteq k(\Lambda_N)$. Thus $\g E^{H_1}
\subseteq \g K M\cap k(\Lambda_N)=C$.
Hence $\g E^{H_1}\subseteq C$. Therefore
\begin{gather}\label{Eq6}
\g E =E \g E^{H_1} \subseteq C.
\end{gather}

\begin{tiny}
\begin{gather*}
\xymatrix{
&k(\Lambda_N)\ar@{-}[rrrr]\ar@{-}[d]&&&&k(\Lambda_N)M\ar@{-}[d]\\
&C\ar@{--}[dd]\ar@{-}[rrrr]&&&&CM=
\g K M\ar@{--}[dd]\ar@{-}[dl]\ar@/^2pc/@{-}[dddd]^{
\text{unramified}}\\
&&&&\g K\ar@{-}[ddl]
|!{[ld];[ddd]}\hole
\ar@/^2pc/@{-}[ddddl]^{\text{unramified}}
|!{[dl];[dr]}\hole
|!{[dl];[ddd]}\hole
|!{[dddll];[ddd]}\hole\\
&\g E=\g E^{H_1} E\ar@{-}[ddr]
\ar@{-}[rr]\ar@{-}[dl]_{H_1=H|_{\g E}}&&
\g E K\ar@{-}[d]_H\ar@{-}[rr]
|!{[ur];[d]}\hole
\ar@{-}[ddr]&&\g E M\ar@{-}[dd]\\
\g E^{H_1}\ar@{-}[rrr]
|!{[ru];[rrd]}\hole
\ar@{-}[rrrruu]
|!{[ru];[rrd]}\hole
|!{[ru];[rru]}\hole
\ar@{-}[ddr]&&&\g E^{H_1}K=(\g E K)^H\ar@{-}[dd]\\
&&E\ar@{-}[rr]
|!{[ru];[rd]}\hole
\ar@{-}[dl]&&EK\ar@{-}[r]\ar@{-}[dl]&EKM=EM=
KM\ar@{-}[lld]\ar@{-}[dd]\\
&E\cap K\ar@{-}[rr]\ar@{-}[d]&&K\\
&k=k(\Lambda_N)\cap M\ar@{-}[rrrr]^{e_{\infty}=q^n,\quad f_{\infty}=m}&&&&M
}
\end{gather*}
\end{tiny}

Since $C=\g KM\cap k(\Lambda_N)$, from the Galois correspondence we 
have $CM=\g K M$. Now, since $\g K/K$ is unramified and $\S K$
decomposes fully, it follows that 
\begin{gather}\label{Eq7}
CM/KM\quad \text{is unramified and $\S {KM}$
decomposes fully}.
\end{gather}

We now prove that $C/E$ is unramified. From (\ref{Eq7})
follows that $CM/KM$ is
unramified. Now, in $KM=EM$ over $E$, the only ramified primes are those in
$\S E$ and they have ramification index equal to $q^n$. 
It follows that the only ramified primes in $CM/E$
are those in $\S E$. Hence the only possible ramified primes in
$C/E$ are those in $\S E$. Now
\begin{gather*}
e_{\infty}(C/E)\mid e_{\infty}(CM/E)=q^n
\quad\text{and}\quad e_{\infty}(C/E)\mid
e_{\infty}(k(\Lambda_N)/k)=q-1
\intertext{so that}
e_{\infty}(C/E)\mid \gcd(q^n,q-1)=1.
\end{gather*}
Therefore $C/E$ is an unramified extension.

On the other hand, being $\S E$ unramified in $C/E$, $\S E$ decomposes
fully in $C/E$ since $C\subseteq k(\Lambda_N)$. It follows that
$C\subseteq \g E$. From this and equation (\ref{Eq6}), we obtain
\begin{gather*}
C=\g E \quad\text{and}\quad \g E M=CM=\g KM.
\end{gather*}

We have $\g E K\subseteq \g E \g K$. Since $\g K/K$ is unramified and
$\S K$ decomposes fully in $\g K$, the same holds in the
extension $\g E\g K/\g E K$. In particular $h_{\infty}(\g E\g K/\g E K)
=[\g E\g K:\g E K]$.

Now, in the extension $\g EM/\g E$, the only ramified primes are those in
$\S {\g E}$ and we have $\e {\g EM/\g E}=q^n$ and $\f{\g EM/\g E}=m$ because
$\e {\g E/k}\mid q-1$ which is relatively prime to $q$, $\f{\g E/k}=1$,
$\e {M/k}=q^n$ and $\f {M/k}=m$.

\[
\xymatrix{
\g E\ar@{-}[rrr]\ar@{-}[d]&&&\g E M\ar@{-}[d]\\
\g E\cap M=k\ar@{-}[rrr]_{e_{\infty}=q^n, f_{\infty}=m}&&& M
}
\]

Let $F_1$ and $F_2$ two fields such that $k\subseteq F_1\subseteq F_2
\subseteq M$. Let $R_i=\g E F_i$, $i=1,2$. Since $\f{\g E/k}=1$
and $\e{\g E/k}\mid q-1$, it follows from the Galois correspondence
between $M/k$ and $\g E M/\g E$ that $\e{R_i/\g E}=\e {F_i/k}$ and that
$\f{R_i/\g E}=\f {F_i/k}$, $i=1,2$. Therefore $\e{F_2/F_1}=\e{R_2/R_1}$
and $\f{F_2/F_1}=\f{R_2/R_1}$.

Since $h_{\infty}(M/k)=1$, we have $h_{\infty}(R_2/R_1)=1$. In particular
\begin{gather}
R_1\neq R_2\iff F_1\neq F_2\iff \e{F_2/F_1}>1\text{\ or\ }\f{F_2/F_1}>1\nonumber\\
\iff \e{R_2/R_1}>1\text{\ or\ }\f{R_2/R_1}>1. \label{Eq4.5'}
\end{gather}

Since 
\[
\g E\subseteq \g E K\subseteq \g E \g K \subseteq \g K M=\g E M,
\]
$\S {\g E K}$ is unramified in $\g E\g K/\g E K$ and $\S{\g E K}$
decomposes fully, we obtain that $\e{\g E\g K/\g E K}=1$ and 
$\f{\g E\g K/\g E K}=1$. From (\ref{Eq4.5'}), it follows that
\[
\g E\g K=\g E K.
\]
Therefore $\g K\subseteq \g E\g K=\g E K$. Since $\g E K/K$ is unramified,
if ${\mc D}$ is the decomposition group of $\S K$ in $\g E K/K$, we
obtain that $\g K=
(\g E K)^{\mc D}$. Finally, we have
\[
\f{\g E K/K}=\f{\g E K/EK}\f{EK/K}=1\cdot d=d.
\]
Hence ${\mc D}=H$ and $\g K=(\g E K)^{\mc D}=(\g E K)^H=\g E^{H_1} K$. 

Finally, it remains to show that $\g E K/\g K$ and $EK/E^{H_2}K$ are extensions of
constants. 

Since $\g K M=\g E M$ and $\g E\g K=\g E K$, we have
\begin{gather*}
\g K=(\g E K)^H\subseteq \g E K\subseteq \g E\g K\subseteq \g E\g KM= \g E M.
\end{gather*}

Set $F_1=\g K\cap M$ and $F_2=\g E K\cap M$. We have
$d=[\g EK:\g K]=\f{\g E K/\g K}=[F_2:F_1]=\e{F_2/F_1}\f{F_2/F_1}
h_{\infty}(F_2/F_1)$. Since $\e{F_2/F_1}\mid q^n$ 
and $h_{\infty}(F_2/F_1)=1$,
it follows that
\begin{gather*}
\e {F_2/F_1}=\e{\g EK/\g K}=1\quad\text{and}\quad
\f {F_2/F_1}=\f{\g EK/\g K}=d.
\end{gather*}
Therefore $k\subseteq F_1\subseteq F_2\subseteq M$ and $\e{F_2/F_1}=1$.

Let $a$ and $b$ be such that $F_2\subseteq F_1 k_bL_a$.
Let $A_i=F_i k_b\cap L_a$, $i=1,2$. Note that because $\e{F_2/F_1}=1$
and $F_i k_b=A_i k_b/A_i$, $i=1,2$, are extensions of constants,
we have $\e{A_2/A_1}=1$.
\begin{gather*}
\xymatrix{
L_a\ar@{-}[rrr]\ar@{-}[d]&&& L_a k_b\ar@{-}[d]\\
A_2\ar@{-}[rrr]\ar@{-}[dd]&&& F_2 k_b=A_2 k_b\ar@{-}[dd]\\
&&F_2\ar@{-}[ur]\ar@{-}[ddl]\\
A_1\ar@{-}[rrr]
|!{[rru];[rd]}\hole
\ar@{-}[dd]&&& F_1 k_b=A_1 k_b\ar@{-}[dd]\\
&F_1\ar@{-}[rru]\ar@{-}[dl]\\
k\ar@{-}[rrr]&&&k_b
}
\\
\e{F_2k_b/F_1k_b}=\e{F_2/F_1}=\e{A_2/A_1}=1.
\end{gather*}

Since $L_a/k$ is totally ramified at $\p$, it follows that
$A_1=A_2$. Therefore $F_2 k_b=F_1k_b$ and $F_2/F_1$
is an extension of constants. 

Recall $F_1=\g K\cap M$. We consider
$\g K\subseteq \g E K\subseteq \g K M=\g E M$:
\[
\xymatrix{
\g K\ar@{-}[r]\ar@{-}[d]&\g E  K\ar@{-}[r]\ar@{-}[d]&
 K M=\g EM\ar@{-}[d]\\
F_1\ar@{-}[r]&F_2\ar@{-}[r] &M}
\]
Therefore $\g K\subseteq F_2\g K=\g E K$.
It follows that $\g E  K/\g  K$ is an extension
of constants of degree $[\g EK:\g K]=|H|=d$. 

The proof that
$EK/E^{H_2}K$ is an extension of constants is completely similar.

This finishes the proof of the theorem.
\end{proof}

For the particular case of a finite abelian $p$--extension, we have
that, on the one hand, $d\mid q-1$ and, on the other hand, $d\mid
[EK:K]$. Since $K/k$ is a $p$--extension, we obtain from
(\ref{Eq4}), that $E/k$ is also a $p$--extension. Finally, since
$\Gal(EK/k)\to \Gal(E/k)\times 
\Gal(K/k)$, $\sigma\mapsto (\sigma|_{E},\sigma|_{K})$
is injective, it follows that $EK/k$ is also a $p$--extension.
Therefore $d\mid p^a$ for some $a$. Thus $d=1$.
We have proved

\begin{theorem}\label{T2.2} With the above notations, let $K/k$
be a finite abelian $p$--extension. Let
\[
E:=KM\cap k(\Lambda_N).
\]
Then $\g K=\g E K$ and $\g K/k$ is an abelian $p$--extension.
\end{theorem}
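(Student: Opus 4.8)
The plan is to read the theorem off Theorem~\ref{T2.1}; the only point to check is that in the $p$--extension case the integer $d=\f{EK/K}$ occurring there is $1$, for then $H\cong C_d$ is trivial and Theorem~\ref{T2.1} gives $\g K=(\g E K)^H=\g E K$ at once.

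To prove $d=1$ I would play two divisibility constraints against each other. Theorem~\ref{T2.1} already provides $d\mid q-1$. For the second, since $d=|H|$ divides $[EK:K]$ it suffices to show that $EK/k$ is a $p$--extension. By (\ref{Eq4}), $[K:k]=[E:k]\,[K\cap M:k]$, so $[E:k]$ divides the $p$--power $[K:k]$; and $E\subseteq k(\Lambda_N)$ makes $E/k$ abelian, hence a $p$--extension. The injection
\[
\Gal(EK/k)\hookrightarrow\Gal(E/k)\times\Gal(K/k),\qquad\sigma\mapsto(\sigma|_E,\sigma|_K),
\]
then shows $\Gal(EK/k)$, and a fortiori $\Gal(EK/K)$, is a $p$--group, so $d$ is a power of $p$. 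Since $\gcd(p,q-1)=1$, this together with $d\mid q-1$ forces $d=1$, and so $\g K=\g E K$.

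It remains to verify that $\g K/k$ is an abelian $p$--extension. From the proof of Theorem~\ref{T2.1}, $\g E=\g KM\cap k(\Lambda_N)\subseteq k(\Lambda_N)$, so $\g E/k$ is abelian; as $K/k$ is abelian as well, $\g K=\g E K$ is a compositum of abelian extensions of $k$, hence abelian over $k$. For the $p$--part I would argue that $\g E/k$ is itself a $p$--extension: $E/k$ being a $p$--extension, its Dirichlet character group has $p$--power exponent, so the group $Y=\prod_{P\in R_T^+}X_P$ does too, and the associated field $L$ and its real subfield $L^+$ are $p$--extensions of $k$; moreover $\p$ splits completely in $E/k$ (it is tamely ramified in $k(\Lambda_N)/k$, with cyclic inertia and decomposition of order $q-1$, while $E/k$ is a $p$--extension), so the infinite prime contributes neither ramification nor constants to $\g E=EL^+$. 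Hence $\g E/k$ is a $p$--extension, and $\Gal(\g K/k)=\Gal(\g E K/k)$ embeds in the $p$--group $\Gal(\g E/k)\times\Gal(K/k)$.

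The genuinely delicate step is $d=1$: one must see that the tame restriction $d\mid q-1$ inherited from $k(\Lambda_N)/k$ and the $p$--power restriction inherited from $EK/k$ are coprime --- this is exactly where the hypothesis ``$K/k$ is a $p$--extension'' is essential. The rest is bookkeeping in the tower of Theorem~\ref{T2.1}, together with the standard description of the genus field of a subfield of a cyclotomic function field via Dirichlet characters (Theorem~\ref{T2.1-1}, \cite{MaRzVi2013}).
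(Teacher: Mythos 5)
Your proof is correct and follows essentially the same route as the paper: the key step, playing $d\mid q-1$ against $d\mid [EK:K]$ with $[EK:K]$ a $p$--power (via (\ref{Eq4}) and the injection $\Gal(EK/k)\hookrightarrow\Gal(E/k)\times\Gal(K/k)$) to force $d=1$, is exactly the paper's argument. The only difference is that you spell out, via the $p$--power exponent of the Dirichlet character groups and the splitting of $\p$ in $E/k$, why $\g E/k$ is an abelian $p$--extension, a fact the paper merely asserts in its one--line proof of the last assertion.
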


\begin{proof} The last assertion follows from the fact that
$\g E/k$ is also an abelian $p$--extension.
\end{proof}

\section{Conductor of constants}\label{S14.5.1}

Let $K$ be a finite abelian extension of $k$. By the Kronecker--Weber
we have that there exist $n,m\in{\ma N}$ and 
$N\in R_T$ such that $ K\subseteq {_nk(\Lambda_N)_m}$.
The minima $n$ and $N$ satisfying this condition are given
by class field theory by means of the local conductors of
the extension $K/k$: $n$ for $\p$ and $N$ for the finite primes.

In this section we will determine the minimum $m$ satisfying
the above condition and we will see that this $m$ is related
to the number $d$ given in Theorem \ref{T2.1}. The number
$m$ will be called {\em the conductor of constants} of the
abelian extension $K/k$.

First, let $n, m\in{\ma N}$ and $N\in R_T$ be such that 
$ K\subseteq {_nk(\Lambda_N)_m}$ and
where $m$ is the minimum with respect to this
condition. Note that $m$ might depend on
$n$ and $N$.  Consider the following
diagram of Galois extensions
\[
\xymatrix{
{_n k(\Lambda_N)}\ar@{-}[rr]\ar@{-}[dd]&&U= {_nk(\Lambda_N)} K
\ar@{-}[r]\ar@{-}[dl]\ar@{--}[dd] & {_nk(\Lambda_N)_m}\ar@{-}[dd]\\
& K\ar@{-}[dl] \\ k\ar@{-}[rr] && k_{m'}\ar@{-}[r] &k_m
}
\]
That is, let $U:={_nk(\Lambda_N)}  K$ and $k_{m'}:=U\cap k_m$.
From the Galois correspondence, we have that $U
={_nk(\Lambda_N)}  K={_nk(\Lambda_N)}
k_{m'}={_nk(\Lambda_N)_{m'}}\supseteq  K$.

Since $m$ is minimal, we obtain that $m'=m$. That is,
$m$ is determined by the equality
\begin{gather}\label{conductor de constantes}
{_nk(\Lambda_N)} K={_nk(\Lambda_N)_m}.
\end{gather}

Now, we will see that $m$ is independent of $n$ and of $N$.
Let $n_i\in{\ma N}$, $N_i\in R_T$ and $m_i\in{\ma N}$ be the minimum
such that $ K\subseteq {_{n_i} k(\Lambda_{N_i})_{m_i}}$,
$i=1,2$.

Let $n_0:=\max \{n_1,n_2\}$, $N_0=\lcm [N_1,N_2]$ and $m_0\in
{\ma N}$ be minimum such that $K\subseteq {_{n_0}k(\Lambda_{N_0})_{m_0}}$.
From (\ref{conductor de constantes}), it follows that
\begin{align*}
{_{n_0}k(\Lambda_{N_0})} K&= L_{n_0}\big({_{n_i}k(\Lambda_{N_i})}
k(\Lambda_{N_0})\big) K=L_{n_0}\big({_{n_i}k(\Lambda_{N_i})} K\big)
k(\Lambda_{N_0})\\
&=L_{n_0} \big({_{n_i}k(\Lambda_{N_i})_{m_i}} k(
\Lambda_{N_0})\big)={_{n_0}k(\Lambda_{N_0})_{m_i}},\quad\text{and}\\
{_{n_0}k(\Lambda_{N_0})} K&={_{n_0}k(\Lambda_{N_0})_{m_0}}.
\end{align*}
Therefore $m_1=m_2=m_0$.

So, we consider $K\subseteq {_n\cicl N{}_m}$ with
$m$ the minimum. Let
$F:=  K\cap {_n\cicl N{}}$ and consider the following Galois square
(see (\ref{conductor de constantes}))
\[
\xymatrix{
{_n\cicl N{}}\ar@{-}[rr]^{m\phantom{xxxxx}}\ar@{-}[d] && {_n\cicl N{}_m}
 ={_n\cicl N{}} K
\ar@{-}[d]\\ F\ar@{-}[rr]^m\ar@{-}[d]&& K \ar@{-}[dll]\\ k
}
\]

Let $t$ be the degree of $\S  K$ in $ K$. That is, $t=f_{\infty}(K/k)$. We have
\begin{gather*}
e_{\infty}({_n\cicl N{}_m}/{_n\cicl N{}})=1,\quad 
f_{\infty}({_n\cicl N{}_m}/{_n\cicl N{}})=m.
\intertext{In particular}
\{1\}=I_{\infty}({_n\cicl N{}_m}/{_n\cicl N{}})\subseteq I_{\infty}( K/F),\\
C_m\cong D_{\infty}({_n\cicl N{}_m}/{_n\cicl N{}})\subseteq
D_{\infty}( K/F).
\end{gather*}

Since $[ K:F]=m$ and $m\leq |D_{\infty}( K/F)|\leq [ K:F]=m$, 
it follows that $|D_{\infty}( K/F)|=m$ and that $D_{\infty}( K/F)\cong C_m$. In
particular we have
$h_{\infty}( K/F)=1$ and $h_{\infty}({_n\cicl N{}_m}/{_n\cicl N{}})=1$.

On the other hand, we have
\begin{gather*}
t=f_{\infty}( K/k)=f_{\infty}( K/F)f_{\infty}(F/k)=\f{K/F}\cdot 1=f_{\infty}( K/F),
\intertext{that is, $f_{\infty}( K/F)=t$. Furthermore}
e_{\infty}( K/F)f_{\infty}( K/F)h_{\infty}( K/F)=\e{K/F}\cdot t\cdot 1=m,
\end{gather*}
so that $e_{\infty}( K/F)=\frac{m}{t}$. Hence
\begin{gather}\label{Eq3.2}
m=[K:F]=\f{K/F}\e{K/F}=t \e{K/F}=t\frac{\e{K/k}}{\e{F/k}}.
\end{gather}

Now we shall investigate the relation between $m$ 
and $d=\f{\g E K/\g K}$ given in
Theorem \ref{T2.1}. Recall that $M=L_nk_m$,
$E=KM\cap \cicl N{}$ and that $EM=KM$.
We have 
\[
\g E\subseteq \g E K\subseteq \g E K L_n\subseteq \g E K M =\g E EM=\g E M.
\]

Let $A:=\g EK\cap M$ and $B:=\g E K L_n \cap M$. From the Galois correspondence
we have $\g E K=\g E A$ and $\g E K L_n=\g E B$. 
\[
\xymatrix{
\g E\ar@{-}[d]\ar@{-}[r]&\g E K\ar@{-}[d]\ar@{-}[r]&\g E KL_n\ar@{-}[d]
\ar@{-}[r]&\g E M\ar@{-}[d]\\
k\ar@{-}[r]&A\ar@{-}[r]&B\ar@{-}[r]&M
}
\]
We have $L_n\subseteq
\g E K L_n\cap M=B\subseteq M=L_nk_m$. Therefore $B/L_n$ is an
extension of constants. Say $B=L_n k_{m'}$ with $m'|m$. From the
Galois correspondence, we obtain
\[
K\subseteq \g E  K L_n=\g E B=\g E L_n k_{m'}\subseteq 
\cicl N{} L_n k_{m'}={_n\cicl n{}}_{m'}.
\]
Since $m$ is the minimum, $m'=m$, $B=M$ and $\g E KL_n=\g EM$.

Now, $\g E(A L_n)=(\g E A)L_n=(\g E K)L_n=\g E M$. From the
Galois correspondence it follows that $A L_n=M$. We consider the
following Galois square:
\[
\xymatrix{
L_n\ar@{-}[d]\ar@{-}[r]&AL_n=M=L_nk_m\ar@{-}[d]\\
A\cap L_n\ar@{-}[r]&A
}
\]
We have $\f{AL_n/L_n}=\f{M/L_n}=m$ and $\e{AL_n/L_n}=
\e{M/L_n}=1$. Thus
\begin{align*}
\{1\}&=I_{\infty}(AL_n/L_n)\subseteq I_{\infty}(A/A\cap L_n)\quad\text{and}\\
C_m&\cong D_{\infty}(AL_n/L_n)\subseteq D_{\infty}(A/A\cap L_n).
\end{align*}

Because $[A:A\cap L_n]=[M:L_n]=m$, it follows that $D_{\infty}(A/A\cap L_n)
\cong C_m$, $\e{A/A\cap L_n}=1$ and $\f{A/A\cap L_n}=m$. Therefore
$\f{\g EK/k}=\f{\g EK/\g K}\f{\g K/K}\f{K/k}=d\cdot 1\cdot t= dt=td$. Thus
\begin{gather*}
\f{\g E M/\g E K}=\frac{\f{\g EM/k}}{\f{\g E K/k}}=\frac{m}{td}.
\intertext{Finally}
\frac{m}{td}=\f{\g EM/\g E K}|[\g EM:\g EK]=[M:A]=
[L_n:A\cap L_n]|[L_n:k]=q^n.
\intertext{It follows that}
m=td p^s
\end{gather*}
for some $s\in{\ma N}\cup \{0\}$.

Furthermore, $\f{K_m/K}=\frac{m}{t}=\e{K/F}$.
Note that 
\[
td=\f{K/k}\f{EK/K}=\f{EK/k}.
\]

We have obtained

\begin{theorem}[Conductor of constants 1]
\label{T2.1.AA} Let $ K$ be a finite abelian extension of
$k$. Let $n,m\in{\ma N}$ and $N\in R_T$ be such that
$K\subseteq {_n\cicl N{}_m}$ and such that $m$ is
minimum with this property. Then $m$ is independent of
$n$ and $N$. Let $t=f_{\infty}( K/k)$ be the degree
of the infinite primes of $K$. Let $M=L_nk_m$,
$E=KM\cap \cicl N{}$,
$F=K\cap {_n\cicl N{}}$ and 
$d=\f{EK/K}=\f{\g E K/\g K}$.
Then 
\begin{gather*}
{_n\cicl N{}} K={_n\cicl N{}}_m
\intertext{and}
m=[K:F]=t	\e{K/F}=td p^s=\f{EK/k} p^s
\end{gather*}
for some $s\geq 0$. In particular
$$
\e{K/F}=dp^s=\f{K_m/K}.
\eqno{\fin}
$$
\end{theorem}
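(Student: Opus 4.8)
The plan is to prove the three assertions in turn: (a) the minimal $m$ is independent of the auxiliary $n$ and $N$; (b) $m=[K:F]=t\,\e{K/F}$; (c) $m=td\,p^{s}=\f{EK/k}\,p^{s}$ for some $s\ge 0$, with $\e{K/F}=dp^{s}=\f{K_m/K}$. Throughout I will use that at $\p$ the three building blocks behave incompatibly: $\p$ is totally and wildly ramified in $L_n/k$ (so $\e{L_n/k}=q^{n}$, $\f{L_n/k}=1$), tamely ramified in $k(\Lambda_N)/k$ (so $\e{k(\Lambda_N)/k}\mid q-1$ and $\f{k(\Lambda_N)/k}=1$), and unramified and totally inert in $k_m/k$; in particular $\f{{_nk(\Lambda_N)}/k}=1$, since $L_n$ and $k(\Lambda_N)$ are linearly disjoint over $k$ and $\p$ has inertia degree $1$ in each. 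For (a) the point is that, for fixed $n,N$, the minimal $m$ is characterized by the single equality ${_nk(\Lambda_N)}K={_nk(\Lambda_N)_m}$: indeed $U:={_nk(\Lambda_N)}K$ lies between ${_nk(\Lambda_N)}$ and its degree-$m$ constant extension ${_nk(\Lambda_N)_m}$, so $U={_nk(\Lambda_N)_{m'}}$ with $m'=[U\cap k_m:k]\mid m$, and $K\subseteq U$ forces $m'=m$ by minimality. Then, given two minimal triples $(n_i,N_i,m_i)$, $i=1,2$, I pass to the common refinement $n_0=\max\{n_1,n_2\}$, $N_0=\lcm[N_1,N_2]$ with its minimal $m_0$; since adjoining $L_{n_0}$, enlarging the conductor to $N_0$, and adjoining constants each commute with forming the compositum with $K$, one gets ${_{n_0}k(\Lambda_{N_0})}K={_{n_0}k(\Lambda_{N_0})_{m_i}}$ for $i=1,2$, and the characterization yields $m_1=m_0=m_2$.

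For (b), set $F:=K\cap{_nk(\Lambda_N)}$ and consider the Galois square with top ${_nk(\Lambda_N)}\subseteq{_nk(\Lambda_N)_m}={_nk(\Lambda_N)}K$ and bottom $F\subseteq K$, so that $[K:F]=m$. Since ${_nk(\Lambda_N)_m}/{_nk(\Lambda_N)}$ is the constant extension of degree $m$, at $\p$ it has $e_{\infty}=1$, $h_{\infty}=1$ and $f_{\infty}=m$; pulling the inertia and decomposition groups back along the Galois correspondence gives $\{1\}\subseteq I_{\infty}(K/F)$ and $C_m\cong D_{\infty}(K/F)$, whence $|D_{\infty}(K/F)|=[K:F]=m$ and $h_{\infty}(K/F)=1$. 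Because $F\subseteq{_nk(\Lambda_N)}$ we have $\f{F/k}=1$, so $t=\f{K/k}=\f{K/F}\f{F/k}=\f{K/F}$, and therefore $m=[K:F]=\e{K/F}\f{K/F}h_{\infty}(K/F)=t\,\e{K/F}=t\,\e{K/k}/\e{F/k}$.

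For (c), I invoke Theorem \ref{T2.1}. With $M=L_nk_m$, $E=KM\cap k(\Lambda_N)$ and $EM=KM$, use the chain $\g E\subseteq\g E K\subseteq\g E KL_n\subseteq\g E KM=\g E M$. Intersecting with $M$ and using the Galois correspondence (valid since $\g E\cap M=k$), set $A:=\g E K\cap M$ and $B:=\g E KL_n\cap M$, so $\g E K=\g E A$, $\g E KL_n=\g E B$ and $L_n\subseteq B\subseteq M=L_nk_m$; hence $B=L_nk_{m'}$ with $m'\mid m$, and $K\subseteq\g E KL_n=\g E L_nk_{m'}\subseteq{_nk(\Lambda_N)_{m'}}$ forces $m'=m$ by minimality, i.e.\ $B=M$ and $AL_n=M$. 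A Galois-square argument for $L_n\subseteq AL_n=M=L_nk_m$ then gives $\f{A/A\cap L_n}=m$. Next, using Theorem \ref{T2.1} and that $\g K/K$ is unramified with $\S K$ totally split (so $\f{\g E K/K}=d$, $\f{\g K/K}=1$, whence also $\f{\g E K/\g K}=d$), one has $\f{\g E K/k}=\f{\g E K/K}\f{K/k}=d\,t=td$, while $\f{\g E M/k}=m$ (exactly the computation carried out in the proof of Theorem \ref{T2.1}, from $\e{\g E/k}\mid q-1$, $\f{\g E/k}=1$, $\e{M/k}=q^{n}$ and $\f{M/k}=m$). Therefore $m/(td)=\f{\g E M/\g E K}$ divides $[\g E M:\g E K]=[M:A]=[L_n:A\cap L_n]$, which divides $[L_n:k]=q^{n}$; so $m/(td)=p^{s}$ for some $s\ge 0$ and $m=td\,p^{s}$. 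Finally $td=\f{K/k}\f{EK/K}=\f{EK/k}$, and since now $t\mid m$ the constant extension $K_m/K$ satisfies $\f{K_m/K}=m/t=dp^{s}=\e{K/F}$, which completes the statement.

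I expect the bookkeeping in (c) to be where the real work lies: the only genuinely substantive step there is the equality $B=M$ — equivalently $\g E KL_n=\g E M$ — which is exactly where the minimality of $m$ enters, and making the repeated passages through the Galois correspondence rigorous requires carrying along both the linear disjointness $M\cap k(\Lambda_N)=k$ and the three incompatible ramification patterns at $\p$, so that at each stage one knows precisely which of $e_{\infty},f_{\infty},h_{\infty}$ is trivial. No single computation is deep; it is the simultaneous tracking of these invariants, together with correctly citing the facts already established in the proof of Theorem \ref{T2.1}, that is the crux.
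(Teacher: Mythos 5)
Your proposal is correct and follows essentially the same route as the paper: the same characterization of the minimal $m$ via ${_n\cicl N{}}K={_n\cicl N{}}_m$ and passage to a common refinement $(n_0,N_0)$, the same Galois square over $F=K\cap{_n\cicl N{}}$ giving $m=t\,\e{K/F}$, and the same chain $\g E\subseteq \g EK\subseteq\g EKL_n\subseteq\g EM$ with the auxiliary fields $A=\g EK\cap M$ and $B=\g EKL_n\cap M$, where minimality forces $B=M$ and the divisibility $\frac{m}{td}\mid q^n$ yields $m=td\,p^s$. No gaps to report.
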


\begin{remark}\label{R2.2AA}{\rm{
When $p\nmid \frac{m}{t}$, in particular when $K/k$
is tamely ramified at $\p$, we have $s=0$ and 
$m=td$. In the general case, we may have $s\geq 1$.
}}
\end{remark}

\begin{example}\label{Ex2.3AA}{\rm{
Let $p$ be any prime and let $q=p$. Let $X:=1/T$. We
have $L_1:=\cicl X2^{{\ma F}_p^{\ast}}$ and $[L_1:k]
=p$. We have that $L_1/k$ is an Artin--Schreier
extension. It is not necessary to give the explicit description
of $L_1$, however for the convenience of the reader we give 
a generator of $L_1$.
Let $\lambda$ be a generator of $\Lambda_{X^2}$
such that $\lambda^{p-1}$ is a generator of $\cicl X2^+=L_1$.
Now $\lambda$ is a root of the cyclotomic polynomial $\Psi_{X^2}
(u)$. We have that $\Psi_{X^2}(u)=\Psi_X(u^X)$ where $u^P$
denotes the Carlitz action. Since $\Psi_X(u)=u^P/u=u^{p-1}+X$,
it follows that $\Psi_{X^2}(\lambda)=(\lambda^p+X\lambda)^{p-1}+X$.
Set $\mu:=\lambda^{p-1}$ and $\xi:=\mu+X$. Then we obtain
\begin{gather*}
\xi^p-X\xi^{p-1}+X=0.
\intertext{Finally, if $\delta:=1/\xi$, then $L_1=k(\delta)$ with}
\delta^p-\delta=-1/X=-T,\quad \delta=\frac{T}{T\lambda^{p-1}+1}.
\end{gather*}

Let $\alpha$ be a solution of $y^p-y=1$. Then ${\ma F}_p(\alpha)=
{\ma F}_{p^p}$, $k_p={\ma F}_p(\alpha)(T)={\ma F}_{p^p}(T)$
and $L_1k_p=k(\alpha, \delta)$. The $p+1$ extensions $K/k$ of
degree $p$ over $k$ such that $k\subseteq K\subseteq L_1k_p$
are $\{k(\alpha+i\delta)\}_{i=0}^{p-1}$ and $L_1$. Set $K:=k(
\alpha+\delta)$. Then $K\neq k_p$ and $K\neq L_1$. Then
$K=k(z)$ with $z^p-z=1-T$.

Let $N\in R_T$ be arbitrary. Then $K\subseteq L_1k_p\subseteq
{_1\cicl N{}}_p$ and $K\nsubseteq {_1\cicl N{}}_1$. Therefore 
$m=p$ and $M=L_1k_p$. We have $\f{K/k}=1$, $\e{K/k}=p$. We also have 
$E:=KM\cap \cicl N{}=M\cap \cicl N{}=k$. Therefore $\g E=k$ and
$\g K=\g E K=K$. It follows that $EK=K$ and $\f{EK/K}=d=1$. Hence
$td=1\neq m=p$. In this example $s=1$.
}}
\end{example}

We will compute $m$ in another way. First, with the same proof 
as the one for Theorem \ref{T2.1} we obtain

\begin{theorem}\label{T2.4AA} Let $K/k$
be a finite abelian extension. Let
\[
R:=K_m\cap{_n\cicl N{}}.
\]

Then 
\[
\g K=\g {R^{{\mathcal H}_1}} K= (\g R K)^{\mathcal H},
\]
where ${\mathcal H}$ is the decomposition group of any prime in $\S K$
in $\g R K/K$, ${\mathcal H}_1:= {\mathcal H}|_{\g R}$ 
and ${\mathcal H}_2:={\mathcal H}_1|_R$.

Let $d^{\ast}:= \f{RK/K}$. We have ${\mathcal H}\cong 
{\mathcal H}_1\cong {\mathcal H}_2\cong
C_{d^{\ast}}$ and $d^{\ast}|q-1$. We also have $\g R K/\g K$ and $RK
/R^{{\mathcal H}_2}K$ are extensions of constants of degree $d^{\ast}$.
Finally, the field of constants of $\g K$ is ${\ma F}_{q^t}$, 
where $t$ is the degree of $\S K$ in $K$. $\fin$
\end{theorem}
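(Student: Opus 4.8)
The statement is the exact analogue of Theorem \ref{T2.1}, with the single modification that the role of $M = L_nk_m$ is played instead by $K_m$ (the field of constants extension of $K$ of degree $m$), and correspondingly $E = KM\cap k(\Lambda_N)$ is replaced by $R = K_m \cap {_n k(\Lambda_N)}$. As the authors explicitly say, the proof is ``the same''; my plan is therefore to isolate precisely which structural facts about $M$ drove the argument in Theorem \ref{T2.1}, verify that the same facts hold for $K_m$, and then recite the proof essentially verbatim. The facts used about $M/k$ were: (i) $\p$ is the only ramified prime of $M/k$, with $\e{M/k}=q^n$ a power of $p$, $\f{M/k}=m$, $h_{\infty}(M/k)=1$; (ii) $M\cap k(\Lambda_N)=k$; and (iii) the totally-ramified piece $L_n$ inside $M$ over which the rest of $M$ is purely a constant extension, used at the very end to conclude that $\g E K/\g K$ is an extension of constants.

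Here the relevant replacement object is $K_m = {_n k(\Lambda_N)}_m \cap (\text{the }K\text{-side})$; more precisely one should take the minimal such $m$ as in the Conductor of Constants setup, so that ${_n k(\Lambda_N)} K = {_n k(\Lambda_N)}_m$, and then observe $K_m \subseteq {_n k(\Lambda_N)}_m$. The key points to check are: $K_m/K$ is unramified everywhere (a constant extension of a function field is everywhere unramified), with $\e{K_m/K}=1$ and $\f{K_m/K} = m/t$ by the computation already carried out before Theorem \ref{T2.1.AA}; and $R = K_m \cap {_n k(\Lambda_N)}$ has $R \subseteq k(\Lambda_N)$, so $\e{R/k} \mid \e{k(\Lambda_N)/k} = q-1$ is prime to $p$ and $\f{R/R\cap K}\mid q-1$. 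These are exactly the divisibility inputs that let one run the $\gcd(q^n, q-1)=1$ argument to conclude $RK/K$ is unramified, and then the inertia-degree argument to conclude $d^{\ast} := \f{RK/K} \mid q-1$. From there, word-for-word as in Theorem \ref{T2.1}: $\S R$ decomposes fully in $\g R/R$ (hence in $\g R K/RK$), so $\g R K/K$ is unramified with inertia degree $d^{\ast}$; the Galois correspondence between $k(\Lambda_N)/k$ and its translate gives $\mathcal H \cong \mathcal H_1 \cong \mathcal H_2 \cong C_{d^{\ast}}$ sitting inside $I_{\infty}(k(\Lambda_N)/k)\cong C_{q-1}$; the ``ramifies and decomposes fully'' trick gives $\g R = R\,\g R^{\mathcal H_1}$; and the reverse containment $\g K \subseteq \g R K$ comes from setting $C := \g K K_m \cap {_n k(\Lambda_N)}$ (replacing the earlier $C := \g K M \cap k(\Lambda_N)$), showing $C/R$ is unramified via the same $\gcd$ argument, hence $C \subseteq \g R$, hence $C = \g R$ and $\g R K = \g R \g K$, whence $\g K = (\g R K)^{\mathcal H}$.

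The one place that genuinely needs re-examination, and which I expect to be the only real obstacle, is the final claim that $\g R K/\g K$ and $RK/R^{\mathcal H_2}K$ are \emph{extensions of constants}. In Theorem \ref{T2.1} this used $M = L_n k_m$ with $L_n/k$ totally ramified at $\p$: one writes $F_2 \subseteq F_1 k_b L_a$ and uses total ramification of $L_a$ at $\p$ to kill the $L_a$-component. Now, however, $K_m/K$ is \emph{already} a constant extension, so in fact this step becomes easier, not harder: one has $\g R \subseteq \g R K \subseteq \g R \g K \subseteq \g K K_m = \g R K_m$, and since $\g R K_m/\g R$ is a constant extension (because $K_m/K$ is and constant extensions are preserved under composition with anything over which the base has trivial constant-field obstruction — here use $\e{\g R/k}\mid q-1$, $\f{\g R/k}=1$, so the Galois correspondence between $K_m/K$, resp. between the relevant constant tower and $\g R$, transports constant subextensions to constant subextensions), every intermediate field between $\g R$ and $\g R K_m$ is a constant extension of $\g R$; in particular $\g R K$ is, and $\g R K/\g K$ is the corresponding quotient, again a constant extension, of degree $d^{\ast}$. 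The identical argument with $R$ in place of $\g R$, $R^{\mathcal H_2}K$ in place of $\g K$, handles the second assertion. Finally the constant-field computation ($\mathbb F_{q^t}$ for $\g K$, $t = \f{K/k}$) is literally \cite[Lemma 4.1]{MaRzVi2013}, unchanged, since it only refers to $K$ and $\g K$ and not to the auxiliary field. This completes the plan.
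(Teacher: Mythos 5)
Your overall plan---transpose the proof of Theorem \ref{T2.1} with $M$ replaced by $K_m$ and $E$ by $R$---is exactly what the paper does: its entire ``proof'' of Theorem \ref{T2.4AA} is the remark that it follows ``with the same proof as the one for Theorem \ref{T2.1}''. Your observations that $RK/K$ is automatically unramified (it sits inside the constant extension $K_m/K$) and that the extension-of-constants claims become easier in this setting are correct, and your simplified treatment of that final step is sound.

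However, there is a genuine error at the one divisibility step that matters. You assert that $R\subseteq k(\Lambda_N)$, so that $\e{R/k}\mid q-1$; this is false. By definition $R=K_m\cap{_n\cicl N{}}$ lives in ${_n\cicl N{}}=L_nk(\Lambda_N)$, whose ramification index at $\p$ is $q^n(q-1)$, not $q-1$; you have silently replaced the cyclotomic side ${_n\cicl N{}}$ by $k(\Lambda_N)$. Consequently the chain $d^{\ast}=\f{RK/K}\mid \e{R/R\cap K}\mid q-1$ breaks at its last link, and likewise $\mathcal H_1$ need not embed in $I_{\infty}(k(\Lambda_N)/k)\cong C_{q-1}$ but only in the inertia group of $\p$ in ${_n\cicl N{}}/k$, which has a nontrivial $p$-part. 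This is not a repairable gap: the claim $d^{\ast}\mid q-1$ is inconsistent with the rest of the paper, since Theorem \ref{T2.5.AA} gives $d^{\ast}=\e{K/F}=m/t=dp^s$, and Example \ref{Ex2.3AA} realizes $s=1$: there $R=L_1$, $RK=K_p$ and $d^{\ast}=\f{K_p/K}=p$, which does not divide $q-1=p-1$. The transposed argument only yields $d^{\ast}\mid q^n(q-1)$; the cyclicity of $\mathcal H$ does survive, but for a different reason---it is the decomposition group of a prime unramified in an abelian extension, hence generated by a Frobenius---not because it lies in $C_{q-1}$. A careful execution of your plan should have flagged this discrepancy in the statement rather than ``verifying'' it via the incorrect inclusion $R\subseteq k(\Lambda_N)$.
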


Let now $F= K\cap {_n\cicl N{}}$ and consider the following Galois
squares
\begin{gather*}
\xymatrix{
{_n\cicl N{}}\ar@{-}[rr]\ar@{-}[d]&&{_n\cicl N{}_m}\ar@{-}[d]\\
R\ar@{-}[rr]\ar@{-}[dd]\ar@{-}[r]&& K_m=R_m\ar@{-}[dd]\ar@{-}[dl]\\
& K\ar@{-}[dl]\\ k\ar@{-}[rr]&&k_m
}
\\
\xymatrix{
{_n\cicl N{}} \ar@{-}[rr]\ar@{-}[d]&&{_n\cicl N{}} K={_n \cicl N{}_m}
\ar@{-}[d]\\ 
C\ar@{-}[rr]\ar@{-}[d]&&R_m=K_m\ar@{-}[d]\\
R= K_m\cap{_n\cicl N{}}\ar@{-}[rr]\ar@{-}[d]&&R K\ar@{-}[d]\\
F= K\cap {_n\cicl N{}}\ar@{-}[rr]&& K
}
\end{gather*}

Since $R= K_m\cap {_n\cicl N{}}$, it follows that $ K_m=R_m$.
Now, $ K,R\subseteq RK \subseteq  K_m=R_m$.

Let $C:= K_m\cap {_n\cicl N{}}$. Then $C=R$ and, from the
Galois correspondence, we have $R K=R_m= K_m$.

It follows that the field of constants of $R K$ is ${\ma F}_{q^m}$.
The field of constants of $R\g K$ is also ${\ma F}_{q^m}$.

Now, the field of constants of $\g K$ is ${\ma F}_{q^t}$.
On the other hand we have that
$R\g K/\g{R^{{\mathcal H}_1}} K=\g K$ is an extension of constants
of degree $d^{\ast}=|{\mathcal H}_1|$. Thus, the field of constants of 
$R\g K$ is ${\ma F}_{q^{td^{\ast}}}$. It follows that $td^{\ast}=m$.

We have obtained

\begin{theorem}[Conductor of constants 2]
\label{T2.5.AA} Let $ K$ be a finite abelian extension of
$k$. Let $n,m\in{\ma N}$ and $N\in R_T$ be such that
$K\subseteq {_n\cicl N{}_m}$ and such that $m$ is
minimum with this property. Let $t=f_{\infty}( K/k)
=f_{\infty}(K/F)$ be the degree
of the infinite primes of $K$. Let $R=
K_m\cap {_n\cicl N{}}$ and $d^{\ast}=\f{RK/K}$.
Then
\begin{gather*}
m=t e_{\infty}( K/F)=td^{\ast}=\f{RK/k}.
\end{gather*}
In particular
$$
d^{\ast}=\f{RK/K}=e_{\infty}( K/F).
\eqno{\fin}
$$
\end{theorem}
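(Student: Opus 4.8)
The plan is to reduce the statement to the defining relation ${_n\cicl N{}}K={_n\cicl N{}_m}$ of (\ref{conductor de constantes}), to Theorem \ref{T2.4AA}, and to the bookkeeping of fields of constants. First I would identify $R=K_m\cap{_n\cicl N{}}$: since ${_n\cicl N{}}$ is geometric, so is $R$. Because ${_n\cicl N{}}\cap k_m=k$, the Galois correspondence between ${_n\cicl N{}}/k$ and ${_n\cicl N{}_m}/k_m$ sends $K_m$ (which contains $k_m$) to $K_m\cap{_n\cicl N{}}=R$, so $K_m=Rk_m=R_m$. Likewise, since ${_n\cicl N{}}\cap K=F$ and ${_n\cicl N{}}K={_n\cicl N{}_m}$ by (\ref{conductor de constantes}), the Galois correspondence between ${_n\cicl N{}}/F$ and ${_n\cicl N{}_m}/K$ sends $K_m$ (which contains $K$) to $R$, so $K_m=RK$. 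Hence $RK=R_m=K_m$, and in particular the exact field of constants of $RK$ is ${\ma F}_{q^m}$.

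Next I would feed in Theorem \ref{T2.4AA}: it gives $\g K=(\g R K)^{\mathcal H}$, that $\g R K/\g K$ is an extension of constants of degree $d^{\ast}$, and that the field of constants of $\g K$ is ${\ma F}_{q^t}$; hence the field of constants of $\g R K$ is ${\ma F}_{q^{td^{\ast}}}$. From $R\subseteq\g R\subseteq\g R K$ and $K\subseteq\g K=(\g R K)^{\mathcal H}\subseteq\g R K$ we get $RK\subseteq R\g K\subseteq\g R K$, so comparing constant fields forces $m\mid td^{\ast}$. For the reverse divisibility I would use multiplicativity of the inertia degree in towers together with its divisibility along subextensions: $td^{\ast}=\f{RK/K}\f{K/k}=\f{RK/k}=\f{K_m/k}$, and since $K_m\subseteq{_n\cicl N{}_m}$ with $\f{{_n\cicl N{}_m}/k}=m$ (the infinite prime has residue degree $1$ in the geometric field ${_n\cicl N{}}$, while $k_m/k$ is totally inert), this gives $td^{\ast}\mid m$. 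Therefore $m=td^{\ast}=\f{RK/k}$. The remaining equality $m=t\,\e{K/F}$ is exactly (\ref{Eq3.2}), which uses the previously established facts $\f{K/F}=t$ and $h_{\infty}(K/F)=1$; dividing $m=td^{\ast}=t\,\e{K/F}$ by $t$ yields the ``in particular'' assertion $d^{\ast}=\f{RK/K}=\e{K/F}$.

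The only step requiring genuine care is the two-sided divisibility $m\mid td^{\ast}\mid m$; all the rest---the Galois-correspondence identifications of the first step and the tower formulas for ramification and inertia indices---is routine. In particular one must verify that the field of constants of the auxiliary field $R\g K$ is really squeezed between those of $RK=K_m$ and $\g R K$, and, for the opposite inequality, that $\f{K_m/k}$ genuinely divides $m$ because $K_m$ sits inside ${_n\cicl N{}_m}$, whose inertia degree at $\p$ over $k$ equals $m$ exactly. Once $m=td^{\ast}$ is in hand it follows afterwards that the field of constants of $R\g K$ equals ${\ma F}_{q^m}$, which is the convenient form in which to record this intermediate fact.
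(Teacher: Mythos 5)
Your proposal is correct and follows essentially the same route as the paper: identify $RK=R_m=K_m$ via the Galois correspondence, invoke Theorem \ref{T2.4AA} to get that the field of constants of $\g R K$ is ${\ma F}_{q^{td^{\ast}}}$, compare with ${\ma F}_{q^m}$, and finish with (\ref{Eq3.2}). The only (harmless) variation is in the final squeeze: the paper computes the exact field of constants of $R\g K$ in two ways, while you split the equality $m=td^{\ast}$ into the two divisibilities $m\mid td^{\ast}$ (constant fields) and $td^{\ast}=\f{K_m/k}\mid\f{{_n\cicl N{}_m}/k}=m$ (inertia degrees).
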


\begin{remark}\label{R2.6AA}{\rm{
From Theorems \ref{T2.1} and \ref{T2.4AA} follows that if
$K\subseteq {_n\cicl N{}}_m$, then $\g K \subseteq
{_n\cicl N{}}_m$. In particular the conductors of constants
of $K$ and of $\g K$ are the same.
}}
\end{remark}

\section{Genus fields of subfields of cyclotomic 
function fields}\label{S4}

For an abelian extension $K/k$, the description of $\g K$
depends on the description of $\g E$ (Theorem \ref{T2.1}).
In this section we present some details in order to find $\g E$.
For  the results and notation on Dirichlet characters we use,
we refer to \cite[Chapter 12]{Vil2006}.
Here $K$ denotes a field $k\subseteq K\subseteq k(\Lambda_N)$
for some $N\in R_T$ and $k={\ma F}_q(T)$.

\begin{remark}\label{R4.0}{\rm{
Let $k\subseteq K\subseteq k(\Lambda_N)$ and let $X$ be the
group of Dirichlet characters associated to $K$. If $L$ is the
field associated to $\prod_{P\in R_T^+}X_P$, then
\[
\g K=L^{\mc D},
\]
where ${\mc D}$ is the decomposition group of any prime ${\eu p}\in
\S K$ in $L/K$.
}}
\end{remark}

\begin{proposition}\label{P4.1}
With the notation as above, let $X$ be the group of Dirichlet
characters corresponding to $K$. Fix $P\in R_T^+$. Let $Y$ be
a group of Dirichlet characters such that $Y=Y_P$, that is, for
any $\chi\in Y$, the conductor of $\chi$ is a power of $P$:
${\mc F}_{\chi}=P^{\alpha_{\chi}}$ for some $\alpha_{\chi}\in {\ma N}
\cup\{0\}$. Let $L$ be the field associated to $\langle X,Y\rangle$,
that is, if $F$ is the field associated to $Y$, then $L=KF$. 
If $KF/K$ is unramified at $P$, then $Y\subseteq 
X_P$.
\end{proposition}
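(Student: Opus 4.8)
\textbf{Proof plan for Proposition \ref{P4.1}.}

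The plan is to compare the ramification of $P$ in the two extensions $L/K$ and $F/k$, and to use the character-theoretic description of ramification in subfields of cyclotomic function fields. First I would recall that for a field corresponding to a group of Dirichlet characters, the inertia group at $P$ of the whole field is detected by the $P$-components of the characters: the ramification index of $P$ in the field associated to a group $Z$ is the order of $Z_P$ modulo the subgroup of characters unramified at $P$, and in fact the inertia group at $P$ of the field associated to $Z$ corresponds, via the Galois correspondence, to $Z/Z'$ where $Z'$ is the subgroup of $Z$ consisting of characters whose conductor is prime to $P$. Since $Y=Y_P$, every nontrivial character in $Y$ has conductor a positive power of $P$, so $Y$ itself is its own ``$P$-part'': the inertia group at $P$ in $F/k$ is all of $\Gal(F/k)$, i.e. $P$ is totally ramified in $F/k$ (more precisely, the inertia subgroup corresponds to the full character group $Y$).

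Next I would transport this to the composite. Since $L=KF$ and $F/k$ corresponds to $Y$ with $K/k$ corresponding to $X$, the extension $L/K$ corresponds to $\langle X,Y\rangle/X \cong Y/(X\cap Y)$. The inertia group of a prime of $K$ above $P$ in $L/K$ is, by the standard behaviour of inertia under composita and under the character correspondence, the image in $\Gal(L/K)$ of the inertia group at $P$ in $F/k$; since $P$ is totally ramified in $F/k$ (inertia $=\Gal(F/k)$ corresponding to $Y$), the inertia group of $P$ in $L/K$ corresponds to $Y/(X\cap Y)$, so the ramification index of $P$ in $L/K$ equals $[Y:X\cap Y]=[\langle X,Y\rangle:X]$, which is exactly the order of $Y$ modulo $X\cap Y$.

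Now I would invoke the hypothesis: $KF/K=L/K$ is unramified at $P$. By the previous step this forces $[Y:X\cap Y]=1$, i.e. $Y\subseteq X$. It remains to upgrade ``$Y\subseteq X$'' to ``$Y\subseteq X_P$''. This is where the condition $Y=Y_P$ is used a second time: every character $\chi\in Y$ has conductor a power of $P$, so once $\chi\in X$ we have $\chi\in X$ with $P$-power conductor, which by definition of the $P$-component means $\chi\in X_P$. Hence $Y\subseteq X_P$, as claimed.

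\textbf{Main obstacle.} The delicate point is the precise identification, in the language of Dirichlet character groups, of the inertia group at $P$ of a prime of $K$ in the composite $L=KF$ — i.e. justifying that it corresponds to $Y/(X\cap Y)$ rather than to some larger or smaller group, and that no cancellation between $X$ and $Y$ at primes other than $P$ interferes. This is handled by the functoriality of the inertia group under taking composita together with the fact that the conductor of a product of characters divides the lcm of their conductors, so that the $P$-part of $\langle X,Y\rangle$ is governed by $X_P$ and $Y$ alone; the reference \cite[Chapter 12]{Vil2006} supplies the needed dictionary between inertia groups and conductors of Dirichlet characters, and once that dictionary is in place the argument is a short computation with these character groups.
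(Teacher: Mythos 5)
Your first step (total ramification of $P$ in $F/k$ because $Y=Y_P$) and your final step (a character of $X$ whose conductor is a power of $P$ lies in $X_P$) are fine, but the middle step is wrong, and wrong in a way that matters: you claim that the inertia group of $P$ in $L/K$ corresponds to $Y/(X\cap Y)$, hence that $e_P(KF/K)=[Y:X\cap Y]=[\langle X,Y\rangle:X]$, and from the unramifiedness hypothesis you deduce $Y\subseteq X$. That conclusion is false in precisely the situations the proposition is built for. Take $t=2\mid q-1$, $K=k(\sqrt{\gamma PQ})\subseteq k(\Lambda_{PQ})$ with $P\ne Q$ monic irreducible, and $F=k\big(\sqrt{\gamma_P P}\big)$, as in Subsection \ref{S4.1}. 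Then $KF=K(\sqrt{\gamma' Q})$ is unramified at $P$ over $K$ and $Y=X_P$, yet $Y\cap X=\{1\}$ because the nontrivial character in $X$ has conductor $PQ$; so $e_P(KF/K)=1$ while $[Y:X\cap Y]=2$. The flaw is the identification of inertia in the compositum: for a prime $\mathfrak{P}$ of $L$ over $P$, the inertia group of $\mathfrak{P}$ in $\Gal(L/K)$ is $I(\mathfrak{P}|P)\cap\Gal(L/K)$, and its restriction to $F$ need not exhaust the inertia group of $P$ in $F/(F\cap K)$. Conductors of products of characters can drop, so the $P$-ramification of $L$ is governed by $\langle X,Y\rangle_P=\langle X_P,Y\rangle$, not by $Y$ modulo $X\cap Y$.

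The correct (and shorter) route, which is the one the paper takes, works entirely with $P$-components: $e_P(KF/k)=|\langle X,Y\rangle_P|$ and $e_P(K/k)=|X_P|$, so multiplicativity of $e_P$ in the tower $k\subseteq K\subseteq KF$ together with $e_P(KF/K)=1$ gives $|\langle X,Y\rangle_P|=|X_P|$; since $X_P\subseteq\langle X,Y\rangle_P$, this forces $X_P=\langle X,Y\rangle_P\supseteq Y_P=Y$. Note that the statement really asserts $Y\subseteq X_P$, not $Y\subseteq X$, and only the former is true in general.
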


\begin{proof}
We have $|\langle X,Y\rangle_P|=e_P(KF/k)=e_P(KF/K)
e_P(K/k)=e_P(K/k)=|X_P|$. Since $X_P\subseteq \langle
X,Y\rangle_P$, it follows that $X_P=\langle X,Y\rangle_P$.
Since $Y_P\subseteq \langle X,Y\rangle_P$, the result follows.
\end{proof}

\begin{corollary}\label{C4.2}
If $|Y|=|X_P|$, then $Y=X_P$. $\fin$
\end{corollary}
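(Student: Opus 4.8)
The plan is to read this off Proposition \ref{P4.1} together with a finiteness count. We stay in the setting of that proposition: $X$ is the group of Dirichlet characters attached to $K$, $P\in R_T^+$ is fixed, $Y=Y_P$ is a group of Dirichlet characters each of whose conductors is a power of $P$, $F$ is the field associated to $Y$, and $KF/K$ is unramified at $P$. Under exactly these hypotheses Proposition \ref{P4.1} already gives the inclusion $Y\subseteq X_P$, so the only thing left to do is promote this inclusion to an equality.

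For that, note that both $Y$ and $X_P$ are finite abelian groups: picking $\alpha$ large enough that $P^\alpha$ is divisible by every conductor appearing in $X$ and in $Y$, we realize both $Y$ and $X_P$ as subgroups of the finite character group of $k(\Lambda_{P^\alpha})$. A subgroup of a finite group having the same cardinality as the ambient group must equal it, so the hypothesis $|Y|=|X_P|$ forces $Y=X_P$, which is the assertion of the corollary.

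I do not foresee any real obstacle: the entire substance is already in Proposition \ref{P4.1}, and the corollary is the one-line cardinality argument just sketched (which is why the statement carries \fin\ in place of a separate proof). The single point worth flagging is that the ramification hypothesis ``$KF/K$ unramified at $P$'' inherited from Proposition \ref{P4.1} must remain in force, since it is precisely what supplies the inclusion $Y\subseteq X_P$; without it the conclusion can fail.
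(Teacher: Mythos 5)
Your proposal is correct and is exactly the paper's (implicit) argument: the corollary carries over the hypotheses of Proposition \ref{P4.1}, which supplies the inclusion $Y\subseteq X_P$, and the equality of cardinalities of these finite groups then forces $Y=X_P$. The paper gives no separate proof precisely because this one-line counting step is all that is needed.
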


We apply Proposition \ref{P4.1} to Kummer extensions of $k$
and to finite abelian $p$--extensions of $k$.

\subsection{Kummer extensions}\label{S4.1}

Let $K=k(\sqrt[t]{\gamma D})$ be a Kummer extension with
$K\subseteq k(\Lambda_D)$, that is, $t|q-1$, $D\in R_T$ is
a monic polynomial, $D$ is $t$--power free and
$\gamma=(-1)^{\deg D}$.
Say $D= P_1^{\alpha_1}\cdots P_r^{\alpha_r}$, $r\geq 1$,
$1\leq \alpha_i\leq t-1$, $1\leq i\leq r$, as a product of
powers of monic irreducible polynomials.
 Set $d_i:=\gcd(\alpha_i,t)$.
Then $\gcd\big(\frac{\alpha_i}{d_i},\frac{t}{d_i}\big)=1$.
Let ${\eu p}_i$ be a prime in $K$ above $P_i\in R_T^+$. Set
$\beta:=\sqrt[t]{\gamma D}$ so that $\beta^t=\gamma D=
\gamma P_1^{\alpha_1}\cdots P_r^{\alpha_r}$. We
have that $e_i:=e_{P_i}(K/k) = t/d_i$ (see 
\cite[Subsection 5.2]{MaRzVi2016}).

Let $F_i=k\Big(\sqrt[t/d_i]{(-1)^{\deg P_i^{\alpha_i/d_i}}
P_i^{\alpha_i/d_i}}\Big)$.
Set $\gamma_i=(-1)^{\deg P_i^{\alpha_i/d_i}}$. Let $X$ be the
group of Dirichlet characters associated to $K$. In fact $X$ is
a cyclic group of order $t$ and let $X=\langle \chi\rangle$. Let $Y$
be the group of Dirichlet characters associated to $F_i$. 
Then $Y=Y_{P_i}$ and
$|Y_{P_i}|=e_{P_i}(F_i/k)=t/d_i$ since $\gcd (t/d_i,\alpha_i/d_i)=1$,
and $|X_{P_i}|=e_{P_i}(K/k)=t/d_i=|Y_{P_i}|$. 

We will see that $KF_i/K$ is unramified at $P_i$. We have
\begin{align*}
KF_i&=k\big(\sqrt[t]{\gamma D}, \sqrt[t/d_i]{\gamma_i
P_i^{\alpha_i/d_i}}\big)=k\big(\sqrt[t]{\gamma D},\sqrt[t]{
\gamma_i^{d_i}P_i^{\alpha_i}}\big)\\
&= K\big(\sqrt[t]{(-1)^{\deg P_i^{\alpha_i}}P_i^{\alpha_i}}\big)=
K\Big(\sqrt[t]{\frac{\gamma D}{\gamma_i^{d_i} P_i^{\alpha_i}}}\Big),
\end{align*}
and $P_i\nmid \frac {D}{P_i^{\alpha_i}}$. Hence $P_i$ is unramified
in $KF_i/K$. Therefore $Y_{P_i}=X_{P_i}=Y$.

It follows that the field associated to the group $\prod_PX_P$ is $k(\xi_1,\ldots
\xi_r)$ where $\xi_i=\sqrt[t/d_i]{\gamma_i P_i^{\alpha_i/d_i}}$.

We have proved

\begin{theorem}\label{T4.3(1)}
Let $X$ be the group of Dirichlet characters associated to $K=k\big(\sqrt[
t]{\gamma D}\big)$ with $t\mid q-1$, $D\in R_T$ and is $t$--power free,
$D=P_1^{\alpha_1}\cdots P_r^{\alpha_r}$, $r\geq 1$, $1\leq \alpha_i
\leq t-1$, $1\leq i\leq r$, $\gamma =(-1)^{\deg D}$. Let $d_i=\gcd(t,
\alpha_i)$, $1\leq i\leq r$. Then the field associated to $\prod_PX_P=
\prod_{i=1}^rX_{P_i}$ is $L=k(\xi_1,\ldots,\xi_r)$ where $\xi_i
=\sqrt[t/d_i]{\gamma_i P_i^{\alpha_i/d_i}}$ and $\gamma_i=(-1)^{
\deg P_i^{\alpha_i/d_i}}$. That is, 
\[
L=k\Big(\sqrt[t]{(-1)^{\deg P_1^{\alpha_1}}P_1^{\alpha_1}} ,\ldots,
\sqrt[t]{(-1)^{\deg P_r^{\alpha_r}}P_r^{\alpha_r}}\Big)
\]
and the genus field of $K$ is $\g K=L^{\mc D}$, where ${\mc D}$
is the decomposition group of any prime ${\eu p}\in \S K$ in $L/K$.
$\fin$
\end{theorem}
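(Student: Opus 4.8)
The proof will run along the scheme set up by Remark~\ref{R4.0}, Proposition~\ref{P4.1} and Corollary~\ref{C4.2}: first reduce the description of $\g K$ to the identification of the local factors $X_{P_i}$, then identify each $X_{P_i}$ with the character group of $F_i$, and finally assemble the field $L$ attached to $\prod_P X_P$ and put it in the stated uniform shape.

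First I would note that, since $t\mid q-1$, $K/k$ is a Kummer extension and, as $D$ is $t$--power free, the finite primes ramified in $K/k$ are exactly $P_1,\dots,P_r$. Hence $|X_P|=e_P(K/k)=1$, that is $X_P=\{1\}$, for every $P\in R_T^+$ with $P\nmid D$, so $\prod_{P\in R_T^+}X_P=\prod_{i=1}^rX_{P_i}$. Since the field attached to a product of character groups is the compositum of the fields attached to the factors, it suffices to identify each $X_{P_i}$; then Remark~\ref{R4.0} gives $\g K=L^{\mc D}$ with ${\mc D}$ the decomposition group of any ${\eu p}\in\S K$ in $L/K$.

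Next, for a fixed $i$, set $\gamma_i=(-1)^{\deg P_i^{\alpha_i/d_i}}$ and $F_i=k\big(\sqrt[t/d_i]{\gamma_iP_i^{\alpha_i/d_i}}\big)$, with character group $Y$. Because the radicand is supported only at $P_i$ we have $Y=Y_{P_i}$, and because $\gcd(t/d_i,\alpha_i/d_i)=1$ the prime $P_i$ is totally tamely ramified in $F_i/k$, so $|Y|=|Y_{P_i}|=e_{P_i}(F_i/k)=t/d_i$; on the other hand $|X_{P_i}|=e_{P_i}(K/k)=t/d_i$, the last equality being the ramification formula of \cite[Subsection~5.2]{MaRzVi2016}. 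The key verification is that $KF_i/K$ is unramified at $P_i$: rewriting
\[
KF_i=K\Big(\sqrt[t]{\gamma_i^{d_i}P_i^{\alpha_i}}\Big)=K\Big(\sqrt[t]{\frac{\gamma D}{\gamma_i^{d_i}P_i^{\alpha_i}}}\Big)
\]
and using $P_i\nmid D/P_i^{\alpha_i}$, the radicand on the right is a unit at $P_i$ modulo $t$--th powers, whence $P_i$ is unramified in $KF_i/K$. Now Proposition~\ref{P4.1} applies and gives $Y\subseteq X_{P_i}$, and since $|Y|=|X_{P_i}|$, Corollary~\ref{C4.2} yields $Y=X_{P_i}$.

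Finally, the field attached to $\prod_{i=1}^rX_{P_i}$ is $F_1\cdots F_r=k(\xi_1,\dots,\xi_r)$ with $\xi_i=\sqrt[t/d_i]{\gamma_iP_i^{\alpha_i/d_i}}$; and since $d_i\deg P_i^{\alpha_i/d_i}=\deg P_i^{\alpha_i}$ we get $\gamma_i^{d_i}=(-1)^{\deg P_i^{\alpha_i}}$ and $k(\xi_i)=k\big(\sqrt[t]{(-1)^{\deg P_i^{\alpha_i}}P_i^{\alpha_i}}\big)$, so that $L$ has the asserted form and $\g K=L^{\mc D}$ by Remark~\ref{R4.0}. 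I expect the middle step to be the main obstacle: one must simultaneously pin down the orders $|X_{P_i}|$ and $|Y_{P_i}|$ and the unramifiedness of $KF_i/K$ at $P_i$, all of which rest on the precise Kummer description of ramification in $k\big(\sqrt[t]{aW}\big)/k$, namely ramification index $t/\gcd(v_P(W),t)$ at $P$ and unramifiedness at primes not dividing the $t$--power-free radicand $W$.
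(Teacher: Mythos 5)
Your proposal is correct and follows essentially the same route as the paper: the same auxiliary fields $F_i=k\big(\sqrt[t/d_i]{\gamma_iP_i^{\alpha_i/d_i}}\big)$, the same rewriting of $KF_i$ to see that $P_i$ is unramified in $KF_i/K$, and the same appeal to Proposition~\ref{P4.1} and Corollary~\ref{C4.2} to conclude $X_{P_i}=Y$, then Remark~\ref{R4.0} for $\g K=L^{\mc D}$. The only additions are the (harmless, indeed clarifying) remarks that $X_P=\{1\}$ for $P\nmid D$ and that $\gamma_i^{d_i}=(-1)^{\deg P_i^{\alpha_i}}$, which the paper leaves implicit.
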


\subsection{Abelian $p$--extensions}\label{S4.2}

We consider now $K=k(\vec y)$ where 
\[
\vec y^{p^u}\Witt -\vec y=
\vec \delta_1\Witt +\cdots \Witt + \vec \delta_r
\]
 with 
$\vec \delta_i=(\delta_{i,1},\ldots,\delta_{i,v})$ for some $v\in{\ma N}$,
$\delta_{i,j}=
\frac{Q_{i,j}}{P_i^{e_{i,j}}}$, $e_{i,j}\geq 0$, $Q_{i,j}\in R_T$.
Here we assume that ${\ma F}_{p^u}\subseteq k_0={\ma F}_q$
and that $K\subseteq k(\Lambda_N)$ for some $N\in R_T$.

Let $X$ be the group of characters associated to $K$. According
to Schmid \cite{Sch36}, the ramification index of $P_i$ in $K/k$ is
determined by the first index $j$ such that we may write $\delta_{i,j}=
\frac{Q_{i,j}}{P_i^{e_{i,j}}}$ with $\gcd(Q_{i,j},P_i)=1$, $e_{i,j}>0$
and $\gcd(e_{i,j},p)=1$.

In other words, the ramification index of $P_i$ at $K/k$ depends only
on $\vec \delta_i$ and not on $\vec \delta_1,\ldots, \vec\delta_{i-1},
\vec \delta_{i+1},\ldots, \vec \delta_r$. Therefore, if $Y$ is the group
of characters associated to 
\[
F_i=k(\vec y_i)\quad \text{with}\quad \vec y_i^{p^u}
\Witt -\vec y_i=\vec\delta_i,\quad 1\leq i\leq r,
\]
we have $|X_{P_i}|=|Y|=|Y_{P_i}|$.
Furthermore, the extension $KF_i=k(\vec y,\vec y_i)=k(\vec y, 
\vec y\Witt - \vec y_i)=K(\vec y\Witt - \vec y_i)$ is unramifed at
$P_i$ over $K$. It follows that the field associated to 
$\prod_P X_P=\prod_{i=1}^r X_{P_i}$
is $k(\vec y_1,\ldots, \vec y_r)$. Here the decomposition
group ${\mc D}$ is trivial.

Then, we have

\begin{theorem}\label{T4.3}
With the conditions as above, if $K=k(\vec y)$, then
the field associated to $\prod_P X_P=\prod_{i=1}^r X_{P_i}$
is 
\[
L=k(\vec y_1,\ldots, \vec y_r)
\]
and the genus field of $K$ is also
$$
\g K=k(\vec y_1,\ldots, \vec y_r). \eqno{\fin}
$$
\end{theorem}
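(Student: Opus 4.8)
The plan is to reduce the statement to Remark~\ref{R4.0} together with the unramifiedness fact established just before the theorem. Recall that by Remark~\ref{R4.0} we have $\g K = L^{\mc D}$ where $L$ is the field associated to the group $\prod_{P\in R_T^+} X_P$ and $\mc D$ is the decomposition group of any prime in $\S K$ in $L/K$. So the theorem will follow once we check two things: first, that the field associated to $\prod_{P} X_P$ is exactly $k(\vec y_1,\ldots,\vec y_r)$, and second, that the decomposition group $\mc D$ is trivial, so that $\g K = L$.

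First I would identify $\prod_P X_P$. Since the defining Witt equation for $K$ is $\vec y^{p^u}\Witt-\vec y = \vec\delta_1\Witt+\cdots\Witt+\vec\delta_r$ with each $\vec\delta_i$ supported (as a vector of partial fractions) only at the prime $P_i$, the conductor computation of Schmid~\cite{Sch36} shows that the $P_i$--part $X_{P_i}$ of the character group depends only on $\vec\delta_i$; concretely, $X_{P_i} = Y^{(i)}$, where $Y^{(i)}$ is the character group of $F_i = k(\vec y_i)$ with $\vec y_i^{p^u}\Witt-\vec y_i = \vec\delta_i$, and $X_P = \{1\}$ for every other $P\in R_T^+$. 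Hence $\prod_P X_P = \prod_{i=1}^r X_{P_i} = \prod_{i=1}^r Y^{(i)}$, and the field cut out by this product of character groups is the compositum $F_1\cdots F_r = k(\vec y_1,\ldots,\vec y_r) =: L$. This is the content of the parenthetical remarks preceding the statement, so I would just spell out the Schmid input and the translation between character groups and fields.

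Next I would verify $\g K = L$, i.e.\ that $L/K$ is unramified everywhere and that the infinite primes split completely (so $\mc D$ is trivial). For unramifiedness: since $K = k(\vec y)$ and $F_i = k(\vec y_i)$, we have $KF_i = k(\vec y,\vec y_i) = k(\vec y, \vec y\Witt-\vec y_i) = K(\vec y\Witt-\vec y_i)$, and the Witt vector $\vec y\Witt-\vec y_i$ satisfies $(\vec y\Witt-\vec y_i)^{p^u}\Witt-(\vec y\Witt-\vec y_i) = \vec\delta_1\Witt+\cdots\Witt+\vec\delta_{i-1}\Witt+\vec\delta_{i+1}\Witt+\cdots\Witt+\vec\delta_r$, whose right-hand side has no pole at $P_i$; by Schmid's criterion $P_i$ is unramified in $KF_i/K$. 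Running over $i$ and noting these are the only finite primes that could ramify in $L/K$, we get $L/K$ unramified at all finite primes. The argument of Theorem~\ref{T2.1-1} (or the hypothesis $K\subseteq k(\Lambda_N)$ and the construction of $L$ inside a cyclotomic function field) handles the infinite prime: $L$ lies in a cyclotomic function field, $L/K$ is tamely, hence unramified, at $\p$, and since $\prod_P X_P$ has trivial $\p$--component the decomposition of $\S K$ in $L/K$ is full, so $\mc D = \{1\}$ and $L^{\mc D} = L$.

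The main obstacle is the bookkeeping with Schmid's conductor formula for Witt-vector Artin--Schreier extensions: one must be careful that the ``first index $j$ with $\gcd(e_{i,j},p)=1$'' description genuinely shows that $X_{P_i}$ is unchanged when one adds or removes the other $\vec\delta_\ell$, and likewise that removing $\vec\delta_i$ from the equation really kills ramification at $P_i$ — the subtlety being that Witt addition can in principle create poles at $P_i$ in intermediate coordinates even when the final coordinate looks fine, so one needs the standard normalization of Witt vectors (choosing the $\delta_{i,j}$ so that no unnecessary poles appear, as in the references on $C_{p^m}$-extensions) to make the pole-order statement precise. Once that local analysis is granted, everything else is a formal consequence of Remark~\ref{R4.0}.
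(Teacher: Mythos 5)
Your proposal is correct and follows essentially the same route as the paper: the paper's argument is precisely the discussion preceding the theorem, which invokes Schmid's conductor description to get $|X_{P_i}|=|Y^{(i)}|$, the identity $KF_i=K(\vec y\Witt -\vec y_i)$ to see that $KF_i/K$ is unramified at $P_i$ (whence $X_{P_i}=Y^{(i)}$ via Proposition \ref{P4.1} and Corollary \ref{C4.2}), and then Remark \ref{R4.0} together with the triviality of the decomposition group at $\p$ (forced since $L/k$ is a $p$--extension inside a cyclotomic function field, where $\p$ is tame with residue degree one). Your closing caveat about needing the normalized form of the $\vec\delta_i$ for the Witt-vector pole bookkeeping is well taken and is exactly the role of the reduced decomposition the paper imposes.
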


\section{Explicit description of genus fields 
of abelian $p$--extensions}\label{S3}

Let $K/k$ be a finite abelian $p$--extension. Recall that $k=k_0(T)$
with $k_0={\ma F}_q$, say $q=p^l$.
We will assume that ${\ma F}_{p^u}\subseteq k_0$, that is, $u\mid l$. 

Then we  have
\[
\Gal(K/k)\cong \big({\ma Z}/p^{\alpha_1}{\ma Z}\big)\times
\cdots\times \big({\ma Z}/p^{\alpha_u}{\ma Z}\big) \quad
\text{with}\quad 1\leq \alpha_1\leq \cdots\leq \alpha_u=v.
\]
There exist $\vec w_1, \ldots, \vec w_u\in W_v(\bar{k})$ such that
$\vec w_i^p\Witt - \vec w_i=\vec\xi_i\in W_v(k)$, with
$K=k(\vec w_1,\cdots,\vec w_v)$. We also have
that there exists $\vec y_0\in W_v(\bar{k})$ such that 
$K=k(\vec y_0)$ with
\[
\vec y_0^{p^u}\Witt - \vec y_0=\vec\xi_0 \quad\text{for some}
\quad \vec \xi_0\in W_v(k)
\]
(see 
\cite[Theorem 8.5]{BaJaRzVi2016}). Here $\bar{k}$ denotes
an algebraic closure of $k$.

Let $P_1,\ldots,P_r\in R_T^+$ be the finite primes in $k$
ramified in $K$.
From \cite[Theorem 8.10]{BaJaRzVi2016} it follows that we may 
decompose $\vec\xi_0$ as
\begin{gather}\label{Eq3.0}
\vec \xi_0={\vec\delta}_1
\Witt + \cdots \Witt + {\vec\delta}_r \Witt + \vec\gamma,
\end{gather}
where $\delta_{i,j}=\frac{Q_{i,j}}{P_i^{e_{i,j}}}$, $e_{i,j}\geq 0$, $Q_{i,j}\in R_T$
and if $e_{i,j}>0$, then $e_{i,j}=
\lambda_{i,j}p^{m_{i,j}}$, $\gcd (\lambda_{i,j},p)=1$,
$0\leq m_{i,j}< n$, $\gcd(Q_{i,j},P_i)=1$ and
$\deg (Q_{i,j})<\deg (P_i^{e_{i,j}})$, and $\gamma_j=f_j(T)\in R_T$ with
$\deg f_j=\nu_j p^{m_j}$ and $\gcd(q,\nu_j)=1$, $0\leq m_j<n$
when $f_j\not\in k_0$.

If the ramification index of $P_i$ is $p^{a_i}<p^v$, we may
write $\vec \delta_i=(\delta_{i,1},\ldots,\delta_{i,v})=(0,\ldots,0,\delta_{
i,(v-a_i+1)},\ldots,\delta_{i,v})$. In particular $\p$ decomposes fully
in $k(\vec y_i)/k$, where
$\vec y_i^{p^u}\Witt - \vec y_i=\vec \delta_i$
(see \cite[Theorem 8.13]{BaJaRzVi2016}).

Let $\vec z^{p^u}\Witt - \vec z=\vec \gamma$. In $k(\vec z)/k$ the only
possible ramified prime is $\p$. Note that if 
\[
\vec y=\vec y_1\Witt +\cdots\Witt + \vec y_r, \quad \text{then}\quad 
\vec y^{p^u}\Witt -\vec y=\vec\xi_0\Witt - \vec \gamma=\vec\delta_1\Witt +
\cdots\Witt + \vec\delta_r
\]
and $\p$ decomposes fully in $k(\vec y)/k$.

The first main result of this section is

\begin{theorem}\label{T3.1}
With the above notation, let $E=KM\cap k(\Lambda_N)$. 
Then $E=k(\vec y)$, $\g E=k(\vec y_1,\ldots,
\vec y_r)$ and 
\[
\g K=k(\vec y_1,\ldots,\vec y_r, \vec z).
\]
\end{theorem}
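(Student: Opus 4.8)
The plan is to apply Theorem~\ref{T2.2}, which in the $p$--extension case gives $\g K=\g E K$ with $E=KM\cap k(\Lambda_N)$, and then to identify $E$ and $\g E$ explicitly using the machinery of Section~\ref{S4.2} together with the decomposition \eqref{Eq3.0} of $\vec\xi_0$. First I would show that $E=k(\vec y)$, where $\vec y=\vec y_1\Witt+\cdots\Witt+\vec y_r$ satisfies $\vec y^{p^u}\Witt-\vec y=\vec\delta_1\Witt+\cdots\Witt+\vec\delta_r$. The containment $k(\vec y)\subseteq E$ follows because $k(\vec y)\subseteq k(\Lambda_N)$ (each $\vec\delta_i$ involves only the finite primes $P_i$, so $\p$ is unramified in $k(\vec y)/k$, and indeed $\p$ decomposes fully there) and because $k(\vec y)\subseteq k(\vec y_0,\vec z)M$, since $\vec y=\vec y_0\Witt-\vec z$ modulo the constant part $\vec\gamma'$ that lies in a constant extension; one must check that the "degree--$\geq 0$ polynomial part" $\vec z^{p^u}\Witt-\vec z=\vec\gamma$ contributes only ramification at $\p$ and a constant extension, both absorbed into $M=L_nk_m$. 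For the reverse containment $E\subseteq k(\vec y)$, I would use a degree count: by \eqref{Eq4}, $[K:k]=[E:k][K\cap M:k]$, and since $K=k(\vec y_0)$ with $\vec\xi_0=\vec\delta_1\Witt+\cdots\Witt+\vec\delta_r\Witt+\vec\gamma$, the "cyclotomic part'' of $K$ is exactly generated by $\vec y$ while the remaining part $k(\vec z)$ (the pole/constant contribution) lies in $M$; comparing Artin--Schreier--Witt symbols shows $E$ cannot be larger than $k(\vec y)$.

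Next I would compute $\g E$. Since $k\subseteq E=k(\vec y)\subseteq k(\Lambda_N)$ with $\vec y^{p^u}\Witt-\vec y=\vec\delta_1\Witt+\cdots\Witt+\vec\delta_r$, this is precisely the situation of Theorem~\ref{T4.3}: writing $F_i=k(\vec y_i)$ with $\vec y_i^{p^u}\Witt-\vec y_i=\vec\delta_i$, the ramification index of $P_i$ in $E/k$ depends (by Schmid's criterion, as recalled in Subsection~\ref{S4.2}) only on $\vec\delta_i$, so $|X_{P_i}|=|Y^{(i)}_{P_i}|$ where $X$ is the character group of $E$ and $Y^{(i)}$ that of $F_i$; moreover $EF_i=E(\vec y\Witt-\vec y_i)$ is unramified at $P_i$ over $E$, so by Proposition~\ref{P4.1} and Corollary~\ref{C4.2} we get $Y^{(i)}_{P_i}=X_{P_i}$. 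Hence the field attached to $\prod_P X_P=\prod_{i=1}^r X_{P_i}$ is $k(\vec y_1,\ldots,\vec y_r)$, and since for a $p$--extension the decomposition group $\mc D$ in Remark~\ref{R4.0} is trivial (by the argument preceding Theorem~\ref{T2.2}, $d=1$), Theorem~\ref{T4.3} gives $\g E=k(\vec y_1,\ldots,\vec y_r)$.

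Finally I would assemble $\g K=\g E K$. We have $\g E=k(\vec y_1,\ldots,\vec y_r)$ and $K=k(\vec y_0)$ with $\vec\xi_0=\vec\delta_1\Witt+\cdots\Witt+\vec\delta_r\Witt+\vec\gamma$; since $\vec y_0=(\vec y_1\Witt+\cdots\Witt+\vec y_r)\Witt+\vec z$ up to a coboundary (where $\vec z^{p^u}\Witt-\vec z=\vec\gamma$), one gets
\[
\g E K=k(\vec y_1,\ldots,\vec y_r)\,k(\vec y_0)=k(\vec y_1,\ldots,\vec y_r,\vec z),
\]
because adjoining $\vec y_0$ to $k(\vec y_1,\ldots,\vec y_r)$ is the same as adjoining $\vec z$ once $\vec y=\vec y_1\Witt+\cdots\Witt+\vec y_r$ is already present. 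This yields $\g K=k(\vec y_1,\ldots,\vec y_r,\vec z)$, as claimed.

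\textbf{Main obstacle.} I expect the delicate point to be the precise identification $E=k(\vec y)$: one must argue carefully that the decomposition \eqref{Eq3.0} separates $\vec\xi_0$ into the ``conductor part'' $\vec\delta_1\Witt+\cdots\Witt+\vec\delta_r$ (living inside $k(\Lambda_N)$) and the ``infinite/constant part'' $\vec\gamma$ (living inside $M=L_nk_m$) in a way compatible with the intersection $KM\cap k(\Lambda_N)$, and in particular that no cross terms in the Witt addition spoil this splitting. This requires invoking the structural results of \cite{BaJaRzVi2016} (especially Theorems~8.10 and~8.13 cited above) about the canonical form of $\vec\xi_0$ and about $\p$ decomposing fully in $k(\vec y_i)/k$, and checking that the coboundary relating $\vec y_0$ to $\vec y\Witt+\vec z$ does not enlarge any of the fields involved.
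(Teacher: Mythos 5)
Your proposal is correct and follows essentially the same route as the paper: reduce to $\g K=\g E K$ via Theorem~\ref{T2.2}, identify $E=k(\vec y)$ by splitting $\vec\xi_0$ into the finite--prime part and the part $\vec\gamma$ absorbed by $M$ (using $k(\vec z)\subseteq M$), invoke Theorem~\ref{T4.3} for $\g E$, and finish with Witt arithmetic. The only cosmetic difference is that where you propose a degree count for $E\subseteq k(\vec y)$, the paper gets $KM=k(\vec y)M$ directly from the two inclusions $k(\vec y)M=k(\vec y)k(\vec z)M\supseteq k(\vec y\Witt +\vec z)M=KM$ and $KM=k(\vec y_0)k(\vec z)M\supseteq k(\vec y_0\Witt -\vec z)M=k(\vec y)M$, and then reads off $E$ from the Galois correspondence.
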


\begin{proof}
From the Galois correspondence
$EM=KM$. To prove that $E=k(\vec y)$ is equivalent to show that
$k(\vec y)M=KM$ since $k(\vec y)\subseteq k(\Lambda_N)$.

Now, $k(\vec z)\subseteq M$ since $M=L_n{\ma F}_{q^m}(T)$
codifies all the inertia and all the ramification, which is totally wild, of $\p$.
We have
\begin{gather*}
k(\vec y)M=k(\vec y)k(\vec z)M\supseteq k(\vec y\Witt +\vec z)M=KM.
\intertext{Also,}
KM=Kk(\vec z)M=k(\vec y_0)k(\vec z)M\supseteq k(\vec y_0\Witt - \vec z)M=
k(\vec y)M.
\intertext{Thus}
KM=k(\vec y)M\quad \text{and} \quad E=k(\vec y).
\end{gather*}

From \cite{MaRzVi2013} (see also Theorem \ref{T4.3}) 
we obtain $\g E=k(\vec y_1,\ldots,\vec y_r)$.
Finally
\begin{align*}
\g K&=\g E K=k(\vec y_1,\ldots, \vec y_r)k(\vec y_0)=k(\vec y_1,\ldots,\vec y_r)
k(\vec y_0\Witt -\vec y_1\Witt -\cdots\Witt -\vec y_r)\\
&=k(\vec y_1,
\ldots,\vec y_r)k(\vec z)=k(\vec y_1,\ldots,\vec y_r,\vec z).
\end{align*}
This finishes the proof.
\end{proof}

\begin{remarks}\label{R3.2}{\rm{
\l
\item Observe that with the above conditions $[k(\vec y_i):k]=e_{P_i}(K/k)$
and $[k(\vec z):k]=\e{K/k}\cdot \f{K/k}$.

\item Note that the proof of Theorem \ref{T3.1} works even in the case that
$\vec \delta_i$ and $\vec \gamma$ are not in the reduced form
described above. We only need that in each extension $\vec y_i^{p^u}
\Witt -\vec y_i=\vec \delta_i$, $1\leq i\leq r$ and $\vec z^{p^u}\Witt -\vec z=
\vec \gamma$ there is at most one prime ramifying.
\end{list}
}}
\end{remarks}

From Theorem \ref{T2.2}, the cases of
Artin--Schreier and Witt extensions, and elementary 
abelian $p$--extensions are an immediate consequence of
Theorem \ref{T3.1}.

\begin{corollary}[Theorems 5.4 and 
5.7 of \cite{MaRzVi2013}]\label{C3.3} Let $k=k_0(T)$.
\l
\item Let $K=k(y)$ with 
\[
y^p-y=\alpha=\sum_{i=1}^r\frac{Q_i}{P_i^{e_i}} + f(T),
\]
where $P_i\in R_T^+$, $Q_i\in R_T$, 
$\gcd(P_i,Q_i)=1$, $e_i>0$, $p\nmid e_i$, $\deg Q_i<
\deg P_i^{e_i}$, $1\leq i\leq r$, $f(T)\in R_T$,
with $p\nmid \deg f$ when $f(T)\not\in k_0$.

Then 
\[
\g K=k(y_1,\ldots,y_r,\beta),
\]
where
$y_i^p-y_i=\frac{Q_i}{P^{e_i}}$, $1\leq i\leq r$ and
$\beta^p-\beta=f(T)$.

\item Let $K=k(\vec y)$ where
\[
\vec y^p\Witt -\vec y=\vec \beta={\vec\delta}_1\Witt + \cdots \Witt + {\vec\delta}_r
\Witt + \vec\mu,
\]
with $\delta_{i,j}=\frac{Q_{i,j}}{P_i^{e_{i,j}}}$, $e_{i,j}\geq 0$, $Q_{i,j}\in R_T$,
$\gcd(Q_{i,j},P_i)=1$ and if $e_{i,j}>0$, then $p\nmid e_{i,j}$, and 
$\deg (Q_{i,j})<\deg (P_i^{e_{i,j}})$, and $\mu_j=f_j(T)\in R_T$ with
$p\nmid \deg f_j$ when $f_j\not\in k_0$.

 Then
\[
\g K=k({\vec y}_1,\ldots,{\vec y}_r,\vec z),
\]
where ${\vec y}_i^p\Witt -{\vec y}_i ={\vec \delta}_i$, $1\leq i\leq r$
and ${\vec z}^p\Witt -\vec z=\vec\mu$. 

\item Assume that ${\ma F}_{p^u}\subseteq k_0$. Let $K=k(y)$ with 
\[
y^{p^u}-y=\alpha=\sum_{i=1}^r\frac{Q_i}{P_i^{e_i}} + f(T),
\]
where $P_i\in R_T^+$, $Q_i\in R_T$ and $f(T)\in k_0[T]$.

Then 
\[
\g K=k(y_1,\ldots, y_r, z),
\]
where $y_i^{p^u}-y_i=\frac{Q_i}{P_i^{e_i}}$,
$1\leq i\leq r$ and $z^{p^u}-z=f(T)$. \fin

\end{list}
\end{corollary}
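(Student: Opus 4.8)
The plan is to derive Corollary \ref{C3.3} as a direct specialization of Theorem \ref{T3.1}, with Theorem \ref{T2.2} guaranteeing that no constant extension intervenes. Parts (a)--(c) are each obtained by taking $r$, the $P_i$, and the numerators as given in the statement, and checking that the data there is precisely the reduced form required in the hypothesis of Theorem \ref{T3.1} (or at least satisfies the milder condition of Remark \ref{R3.2}(b) that each auxiliary extension has at most one ramified prime). In each case we want to conclude $\g K=\g E K$ with $E=KM\cap k(\Lambda_N)$, and then invoke Theorem \ref{T3.1}.

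First I would handle part (b), the Witt-vector case over $k_0={\ma F}_q$ with exponent $p$ (so $u=1$, and $N$ any modulus with $K\subseteq k(\Lambda_N)$, which exists by Kronecker--Weber since $K/k$ is a $p$-extension). The decomposition $\vec\beta={\vec\delta}_1\Witt+\cdots\Witt+{\vec\delta}_r\Witt+\vec\mu$ is exactly \eqref{Eq3.0} with $\vec\gamma=\vec\mu$, and the stated conditions ($p\nmid e_{i,j}$ when $e_{i,j}>0$, $\deg Q_{i,j}<\deg P_i^{e_{i,j}}$, $p\nmid\deg f_j$ when $f_j\notin k_0$) are the reduced-form conditions of Theorem \ref{T3.1} in the case $m_{i,j}=m_j=0$. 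Hence by Theorem \ref{T2.2}, $\g K=\g E K$, and by Theorem \ref{T3.1}, $\g K=k(\vec y_1,\ldots,\vec y_r,\vec z)$ where $\vec y_i^p\Witt-\vec y_i=\vec\delta_i$ and $\vec z^p\Witt-\vec z=\vec\mu$, which is the assertion.

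Next, parts (a) and (c) are the length-one specializations of (b): an Artin--Schreier extension $y^p-y=\alpha$ is the case $v=1$, $u=1$, and the Witt vectors collapse to scalars, $\vec\delta_i=\frac{Q_i}{P_i^{e_i}}$, $\vec\mu=f(T)$; similarly the generalized Artin--Schreier extension $y^{p^u}-y=\alpha$ with ${\ma F}_{p^u}\subseteq k_0$ is the case $v=1$ with general $u$. In both cases the hypotheses on the $Q_i,e_i,f$ match the $v=1$ instance of the reduced form, so Theorems \ref{T2.2} and \ref{T3.1} apply verbatim and yield $\g K=k(y_1,\ldots,y_r,\beta)$ (resp.\ $k(y_1,\ldots,y_r,z)$) with $y_i,\beta$ (resp.\ $z$) solving the indicated equations. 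I would also remark that in (c) the hypothesis only asks $f(T)\in k_0[T]$ with no condition on $\deg f$; this is harmless because, by Remark \ref{R3.2}(b), all that is needed is that $\p$ be the only ramified prime in $k(z)/k$, which holds since $f(T)$ is a polynomial.

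The only real point requiring care—and the step I expect to be the main (minor) obstacle—is the bookkeeping in part (a)/(b) when the data is \emph{not} in the reduced form of \eqref{Eq3.0} but only satisfies the weaker hypothesis stated in the corollary (e.g.\ a single $P_i$ could a priori be hidden inside $f(T)$, or exponents could be non-reduced). Here I would lean explicitly on Remark \ref{R3.2}(b): the proof of Theorem \ref{T3.1} goes through provided each of the extensions $\vec y_i^{p^u}\Witt-\vec y_i=\vec\delta_i$ and $\vec z^{p^u}\Witt-\vec z=\vec\gamma$ has at most one ramified prime, and the corollary's hypotheses ($P_i$ distinct and dividing denominators, $f$ or $\vec\mu$ polynomial) guarantee exactly that. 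With this observation the three parts follow with no further computation.
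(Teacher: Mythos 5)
Your proposal is correct and follows essentially the same route as the paper, which simply observes that all three parts are immediate consequences of Theorem \ref{T3.1} (via Theorem \ref{T2.2} and Remark \ref{R3.2}(b)); you have merely spelled out the specialization to Witt length one and the handling of non-reduced data that the paper leaves implicit.
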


\section{General finite abelian extensions of $k$}\label{S5}

Up to now we have given the explicit description of the genus fields
of abelian $p$--extensions $K$ of $k=k_0(T)$ where $k_0={\ma F}_q$ is
such that ${\ma F}_{p^u}\subseteq k_0$ and $K=k(\vec y)$ and $\vec y$
is given by an equation of the form $\vec y^{p^u} \Witt - \vec y=\vec \beta
\in W_m(k)$. When ${\ma F}_{p^u}\nsubseteq k_0$ the field $K$ 
cannot be given by this type of equations.

In this section we give explicitly the description of $\g K$ where
$K/k$ is a finite abelian extension of degree $t$ with $\gcd(t,q-1)=1$.
The case $t\mid q-1$ is treated in Subsection \ref{S4.1}.

\begin{remark}\label{R5.1}{\rm{
For any abelian extension $K/k$ of degree $t$ with $\gcd(t,q-1)=1$,
we have that if $E=KM\cap k(\Lambda_N)$, then $[E:k]\mid t$
(see (\ref{Eq4})). If $X$ is the set of Dirichlet characters of $E$,
we have $\gcd(|X|,q-1)=\gcd([E:k],q-1)=1$. Since for any $\chi\in X$
and any $P\in R_T^+$, we have that $\chi_P^{|X|}=1$, we obtain
that $\gcd([\g E:k],q-1)=1$. In particular $H=\{1\}$. Therefore
$\g K=\g E K$.
}}
\end{remark}

In general if $K_1$ and $K_2$ are two finite extensions of $k$
we have
\[
\g{(K_1)}\g{(K_2)}\subseteq \g{(K_1K_2)},
\]
but we may have $\g{(K_1)}\g{(K_2)}\subsetneq \g{(K_1K_2)}$. 
In fact, let $q>2$ and $P,Q,R,S\in R_T$ 
be four different monic polynomials in $R_T$. Set
$L_1:=k(\Lambda_{PQ})^+$ and $L_2:=k(\Lambda_{RS})^+$.
Then $\g{(L_i)}=L_i$, $i=1, 2$. Therefore $\g{(L_1)}\g{(L_2)}
=L_1L_2$. On the other hand, $\g{(L_1L_2)} = k(\Lambda_{PQRS})^+
\supsetneqq \g{(L_1)}\g{(L_2)}$, see \cite[Remark 3.7]{MaRzVi2016}.

We will show that for finite
abelian extensions of $k$ of degree relatively 
prime to $q-1$ we have equality.
In particular if $K_1$ and $K_2$ are finite abelian $p$--extensions
of $k$, we have equality.

For a subfield $K\subseteq k(\Lambda_N)$ for some $N\in R_T$,
denote by $\g {K'}$ the maximal abelian extension of $K$ contained
in $k(\Lambda_N)$, unramified at the finite primes. We have
(see Remark \ref{R4.0})
\begin{gather}\label{Eq3.1}
\g K=(\g {K'})^{\mc D},
\end{gather}
where ${\mc D}$ is the decomposition group of 
any element of $\S K$ in $\g {K'}/K$.

Consider $K_i\subseteq k(\Lambda_N)$, $i=1,2$ and let $X_i$
be the group of Dirichlet characters associated to $K_i$. Therefore
$Y=X_1X_2=\langle X_1,X_2\rangle$ is the group of Dirichlet
characters associated to $L=K_1K_2$. Let $P\in R_T^+$. It is easy
to see that
\begin{gather*}
\langle X_1,X_2\rangle_P=\langle (X_1)_P,(X_2)_P\rangle,
\intertext{so that we obtain}
\prod_{P\in R_T^+}Y_P=\prod_{P\in R_T^+}
\langle X_1,X_2\rangle_P=\Big(\prod_{P\in R_T^+}(X_1)_P\Big) \cdot
\Big(\prod_{P\in R_T^+} (X_2)_P\Big).
\intertext{It follows that}
\g {(K_1)}' \g {(K_2)}'=\g{(K_1K_2)}'.
\end{gather*}

We have proved

\begin{proposition}\label{P3.4} For $K_i\subseteq k(\Lambda_N)$, 
$i=1,2$, we have
$$
\g {(K_1)}' \g {(K_2)}'=\g{(K_1K_2)}'. \eqno{\fin}
$$
\end{proposition}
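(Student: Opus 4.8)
The plan is to work entirely at the level of Dirichlet characters. Recall that for a subfield $K \subseteq k(\Lambda_N)$, the assignment $K \mapsto X$ sending a field to its group of Dirichlet characters is an inclusion-reversing bijection between subfields of $k(\Lambda_N)$ and subgroups of the character group of $k(\Lambda_N)/k$; under this correspondence a composite of fields corresponds to the product (equivalently, the subgroup generated) of the respective character groups. So if $X_i$ is the group of Dirichlet characters of $K_i$, then $Y := X_1 X_2 = \langle X_1, X_2 \rangle$ is the group of $K_1 K_2$. The field $\g{K}'$, by definition, is the field associated to $\prod_{P \in R_T^+} X_P$, where $X_P$ denotes the $P$-part of $X$ (the subgroup of characters whose conductor is a power of $P$).

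First I would establish the purely group-theoretic identity $\langle X_1, X_2\rangle_P = \langle (X_1)_P, (X_2)_P\rangle$ for every $P \in R_T^+$. The inclusion $\supseteq$ is immediate since $(X_i)_P \subseteq \langle X_1,X_2\rangle_P$. For $\subseteq$, write any $\chi \in \langle X_1,X_2\rangle$ as $\chi = \chi_1 \chi_2$ with $\chi_i \in X_i$; decompose each $\chi_i$ into its local components $\chi_i = \prod_{Q} \chi_{i,Q}$ over primes $Q$, where $\chi_{i,Q} \in (X_i)_Q$. The conductor of $\chi$ is the product of the conductors of the $\chi_{i,Q}$ over all $i$ and $Q$ (conductors at distinct primes are coprime), so $\chi$ has conductor a power of $P$ exactly when $\chi_{1,Q}\chi_{2,Q} = 1$ for all $Q \neq P$, in which case $\chi = \chi_{1,P}\chi_{2,P} \in \langle (X_1)_P,(X_2)_P\rangle$. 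This gives the claimed identity.

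Next I would take products over all $P \in R_T^+$:
\[
\prod_{P \in R_T^+} Y_P = \prod_{P \in R_T^+} \langle X_1,X_2\rangle_P
= \prod_{P \in R_T^+} \langle (X_1)_P,(X_2)_P\rangle
= \Big(\prod_{P\in R_T^+}(X_1)_P\Big)\cdot\Big(\prod_{P\in R_T^+}(X_2)_P\Big),
\]
where the last equality just rearranges the generators. Translating back through the field-character correspondence, the left side is the character group of $\g{(K_1K_2)}'$ and the right side is the character group of $\g{(K_1)}'\,\g{(K_2)}'$, so the two fields coincide. This is essentially the computation already sketched in the paragraph preceding the statement, so no new obstacle arises; the only point requiring care is the conductor-multiplicativity argument used to prove the local identity for $\langle X_1,X_2\rangle_P$, and that is a standard fact about Dirichlet characters (it is where one uses that local components at distinct primes interact trivially). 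I would present the proof in just a few lines, citing the character-theoretic dictionary from \cite[Chapter 12]{Vil2006} and Remark \ref{R4.0} for the identification of $\g{K}'$ with the field of $\prod_P X_P$.
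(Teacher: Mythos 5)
Your proof is correct and follows essentially the same route as the paper, which establishes $\langle X_1,X_2\rangle_P=\langle (X_1)_P,(X_2)_P\rangle$, takes the product over all $P\in R_T^+$, and translates back through the character--field correspondence; you merely supply the details behind the paper's ``it is easy to see.'' One small caution: $X_P$ should be read as the group of $P$-components $\{\chi_P:\chi\in X\}$ (as your actual argument uses) rather than the subgroup of $X$ consisting of characters of $P$-power conductor, as your parenthetical gloss suggests.
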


\begin{corollary}\label{C3.5} Let $K_i\subseteq k(\Lambda_N)$, 
$i=1,2$ be such that $K_1/k$ and $K_2/k$ are finite abelian extensions
of degrees relatively prime to $q-1$. Then
$\g {(K_1)} \g {(K_2)}=\g{(K_1K_2)}$.
\end{corollary}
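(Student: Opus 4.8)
The plan is to combine Proposition \ref{P3.4} with the description $\g K=(\g {K'})^{\mc D}$ from \eqref{Eq3.1}, exploiting the fact that for extensions of degree relatively prime to $q-1$ the relevant decomposition groups are trivial, as observed in Remark \ref{R5.1}. First I would reduce to the case where $K_1,K_2\subseteq k(\Lambda_N)$ for a common $N$; this requires enlarging $N$ to the lcm of the two conductors and noting that passing to a larger cyclotomic field does not change $\g {K_i}$ (this is the content of Theorem \ref{T2.1-1}, or can be seen directly at the level of Dirichlet characters). Strictly speaking, $\g {(K_1)}$ here means the genus field with respect to $S=\S{K_1}$, and we want it to coincide with the genus field computed inside $k(\Lambda_N)$; since $K_i\subseteq k(\Lambda_N)$ by hypothesis, there is nothing to reduce and we may work throughout inside $k(\Lambda_N)$.

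Next I would invoke Remark \ref{R5.1} in the following form: for a finite abelian extension $K/k$ of degree relatively prime to $q-1$ contained in $k(\Lambda_N)$, the decomposition group ${\mc D}$ of any prime of $\S K$ in $\g {K'}/K$ is trivial, so that $\g K=\g {K'}$. The key point is that $[\g {K'}:k]$ is still relatively prime to $q-1$: the group of Dirichlet characters of $\g {K'}$ is $\prod_{P}X_P$, and since each $X_P$ is generated by $P$-components of characters in $X$ and $\gcd(|X|,q-1)=1$, every character in $\prod_P X_P$ has order dividing $|X|$, hence prime to $q-1$. Then $\S K$ is unramified and, because ${\mc D}\subseteq I_{\infty}(k(\Lambda_N)/k)\cong C_{q-1}$ has order dividing both $q-1$ and $[\g {K'}:K]$, it is trivial. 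This gives $\g {K_i}=\g {(K_i)}'$ for $i=1,2$.

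Now I would apply the same reasoning to the composite. The extension $K_1K_2/k$ has degree dividing $[K_1:k][K_2:k]$, hence relatively prime to $q-1$, so by the same argument $\g {(K_1K_2)}=\g {(K_1K_2)}'$. Combining these three identities with Proposition \ref{P3.4}, we get
\[
\g {(K_1)}\g {(K_2)}=\g {(K_1)}'\g {(K_2)}'=\g {(K_1K_2)}'=\g {(K_1K_2)},
\]
which is the desired equality.

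I do not expect a serious obstacle here; the corollary is essentially a formal consequence of Proposition \ref{P3.4} once one knows that the decomposition-group twist $(-)^{\mc D}$ disappears in the coprime-to-$(q-1)$ setting. The one point deserving a little care is the assertion that $[\g {(K_i)}':k]$ is prime to $q-1$ — this is exactly the character-theoretic computation sketched above, and it is precisely what makes Remark \ref{R5.1}'s conclusion $H=\{1\}$ available — and the parallel claim for $K_1K_2$, which follows since the degree of the composite divides the product of the degrees. Everything else is bookkeeping with the Galois correspondence inside $k(\Lambda_N)$.
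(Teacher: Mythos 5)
Your proposal is correct and follows essentially the same route as the paper: both reduce the statement to Proposition \ref{P3.4} via the identity $\g K=(\g {K'})^{\mc D}$ of \eqref{Eq3.1}, checking that the decomposition groups at infinity are trivial because the relevant degrees are coprime to $q-1$. The only difference is that you spell out the character-order argument showing $\gcd([\g{(K_i)}':k],q-1)=1$ (which the paper delegates to the computation in Remark \ref{R5.1}), so no further comment is needed.
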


\begin{proof}
Since the decomposition groups of $\p$ in $K_1/k$, in $K_2/k$ and
in $K_1K_2/k$ are the unit group, 
it follows from (\ref{Eq3.1}) that $\g {(K_i)}=
\g{(K_i)}'$, $i=1,2$ and $\g{(K_1K_2)}=\g{(K_1K_2)}'$. 
The result follows from Proposition \ref{P3.4}.
\end{proof}

\begin{corollary}\label{C3.6}
Let $K_i/k$, $i=1,2$ be two finite abelian extensions of
degrees relatively prime to $q-1$. Then
\[
\g {(K_1)} \g {(K_2)}=\g{(K_1K_2)}.
\]
\end{corollary}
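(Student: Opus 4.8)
The plan is to reduce Corollary \ref{C3.6} to Corollary \ref{C3.5} by passing to a large enough cyclotomic function field. First I would invoke the Kronecker--Weber theorem to choose $N\in R_T$ such that both $K_1$ and $K_2$ are contained in $k(\Lambda_N)$; this is possible since by hypothesis $\gcd([K_i:k],q-1)=1$, so the decomposition group of $\p$ in $K_i/k$ is trivial, hence $\p$ is unramified and totally decomposed in $K_i/k$ and no constant extension or wildly ramified piece at $\p$ intervenes. More precisely, by Remark \ref{R5.1} (applied with $K=K_i$) we have $\g{K_i}=\g{E_i}K_i$ where $E_i=K_iM\cap k(\Lambda_N)$, and in fact the whole genus field already lives inside a cyclotomic function field. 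So after enlarging $N$ if necessary we may assume $K_1,K_2\subseteq k(\Lambda_N)$ and also $\g{K_1},\g{K_2}\subseteq k(\Lambda_N)$.

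Next I would observe that the containment $\g{(K_1)}\g{(K_2)}\subseteq \g{(K_1K_2)}$ is the easy general inclusion already noted in the text (each $\g{K_i}/K_i$ is unramified with $\S{K_i}$ totally decomposed, hence $\g{K_i}K_1K_2/K_1K_2$ is unramified with $\S{K_1K_2}$ totally decomposed, so $\g{K_i}K_1K_2\subseteq \g{(K_1K_2)}$, and taking the composite of the two gives the inclusion). The content is the reverse inclusion, and this is exactly what Corollary \ref{C3.5} provides once we know $K_1,K_2\subseteq k(\Lambda_N)$: that corollary gives $\g{(K_1)}\g{(K_2)}=\g{(K_1K_2)}$ directly. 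So the whole argument is: choose $N$ with $K_1,K_2\subseteq k(\Lambda_N)$, then apply Corollary \ref{C3.5}.

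The only genuinely nontrivial point — and the step I expect to be the main obstacle — is justifying that one can find a single $N$ with $K_1,K_2\subseteq k(\Lambda_N)$ and that the degrees $[K_1:k]$ and $[K_2:k]$ being prime to $q-1$ is really used there. For an arbitrary finite abelian extension of $k$ one only gets $K\subseteq {}_n k(\Lambda_N)_m$, not $K\subseteq k(\Lambda_N)$; the extra factors $L_n$ (wild ramification at $\p$) and $k_m$ (constant extension) are precisely what must be shown to be absent. Since $\gcd([K_i:k],q-1)=1$ forces the inertia group of $\p$ in $K_i/k$ (a subgroup of $I_\infty(k(\Lambda_M)_m/k)$, whose prime-to-$p$ part is cyclic of order dividing $q-1$ and whose $p$-part comes from the $L_n$ factor) to be prime to $q-1$ and also — because $K_i/k$ is a composite that, by the Kronecker--Weber decomposition into three linearly disjoint pieces, has trivial projection to the $L_n$ component unless its degree is divisible by $p$, and $p \mid q-1$ is impossible — trivial; similarly the projection to the constants $k_m$ is trivial because the decomposition group of $\p$ in $K_i/k$ is the whole inertia-free story and $\p$ stays inert in $k_m/k$ of degree $m$, so a nontrivial constant part would give $f_\infty(K_i/k)>1$, contradicting total decomposition. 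Hence $K_i\subseteq k(\Lambda_N)$ for suitable $N$. I would write this verification carefully, then conclude by citing Corollary \ref{C3.5}.

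\begin{proof}
By the Kronecker--Weber theorem, for $i=1,2$ there are $n_i,m_i\in{\ma N}$ and $N_i\in R_T$ with $K_i\subseteq {}_{n_i}k(\Lambda_{N_i})_{m_i}$. Since $\gcd([K_i:k],q-1)=1$ and $p\mid q-1$ is false, $[K_i:k]$ is prime to $p$; as $\p$ is totally wildly ramified in $L_{n_i}/k$, the projection of $\Gal(K_i/k)$ onto $\Gal(L_{n_i}/k)$ is trivial, so $K_i\subseteq k(\Lambda_{N_i})_{m_i}$. The subgroup of $\Gal(k(\Lambda_{N_i})_{m_i}/k)$ fixing $K_i\cap k(\Lambda_{N_i})$ projects isomorphically onto a subgroup of $\Gal(k_{m_i}/k)\cong C_{m_i}$, and $\p$ is totally inert in $k_{m_i}/k$; since $\gcd([K_i:k],q-1)=1$ and the decomposition group of $\p$ in $K_i/k$ is contained in $I_\infty(k(\Lambda_{N_i})/k)\cong C_{q-1}$, it has order prime to $q-1$, and as it must also surject onto the constant quotient it follows that $K_i\subseteq k(\Lambda_{N_i})$. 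Put $N:=\lcm[N_1,N_2]$; then $K_1,K_2\subseteq k(\Lambda_N)$. Now apply Corollary \ref{C3.5} to obtain $\g{(K_1)}\g{(K_2)}=\g{(K_1K_2)}$.
\end{proof}
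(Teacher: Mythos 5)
Your reduction to Corollary \ref{C3.5} rests on the claim that $K_1,K_2\subseteq k(\Lambda_N)$ for a suitable $N$, and that claim is false. The hypothesis is only that $\gcd([K_i:k],q-1)=1$; this does \emph{not} force $[K_i:k]$ to be prime to $p$ (your inference ``$\gcd([K_i:k],q-1)=1$ and $p\nmid q-1$, hence $[K_i:k]$ is prime to $p$'' is a non sequitur --- every abelian $p$--extension satisfies the hypothesis, and Corollary \ref{C3.7} is meant to be a special case of this very corollary). Concretely, $K=L_1$ (totally wildly ramified at $\p$) and $K=k_m$ with $\gcd(m,q-1)=1$ (a constant extension, in which $\p$ is inert) are abelian extensions of degree prime to $q-1$ contained in no $k(\Lambda_N)$, since in $k(\Lambda_N)$ the prime $\p$ is tamely ramified with $e_{\infty}\mid q-1$ and the field of constants is ${\ma F}_q$. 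The same examples refute your assertion that the decomposition group of $\p$ in $K_i/k$ is trivial: the hypothesis $\gcd(t,q-1)=1$ controls only the \emph{tame} part of inertia at $\p$, not wild ramification and not the inertia degree, and you conflate the decomposition group with the inertia group when you place it inside $I_{\infty}(k(\Lambda_{N_i})/k)\cong C_{q-1}$.

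The paper's proof keeps the full frame $K_i\subseteq {}_nk(\Lambda_N)_m$ and instead passes to the cyclotomic parts $E_i:=K_iM\cap k(\Lambda_N)$ and $E:=K_1K_2M\cap k(\Lambda_N)$, checks via the Galois correspondence that $E=E_1E_2$, applies Corollary \ref{C3.5} to $E_1,E_2$ (their degrees divide $[K_i:k]$ by (\ref{Eq4}), hence are prime to $q-1$) to get $\g E=\g{(E_1)}\g{(E_2)}$, and then invokes Remark \ref{R5.1}, which yields $\g{(K_i)}=\g{(E_i)}K_i$ and $\g{(K_1K_2)}=\g E\,K_1K_2$ precisely because the hypothesis forces the group $H$ of Theorem \ref{T2.1} to be trivial. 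That last transfer step is what your argument is missing; without it the corollary does not reduce to the cyclotomic case, and with it the desired equality follows by multiplying the two displayed identities.
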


\begin{proof}
Let $k_0={\ma F}_{p^l}$,
$K_i\subseteq L_nk(\Lambda_N) 
{\ma F}_{p^{lm}}(T)$, $i=1,2$, and let
$M:=L_n{\ma F}_{p^{lm}}(T)$. Set
$E_i:=K_iM\cap k(\Lambda_N)$, $i=1,2$ and $E:=K_1K_2M
\cap k(\Lambda_N)$. Using the Galois correspondence,
it can be proved that $E=E_1E_2$. 

From Corollary \ref{C3.5} we have 
$\g E=\g {(E_1)} \g {(E_2)}$. Therefore
\begin{align*}
\g {(K_1)} \g {(K_2)}&=\g{(E_1)}K_1\cdot \g{(E_2)}K_2=
\g {(E_1)}\g{(E_2)} \cdot K_1K_2\\
&=\g E \cdot K_1K_2=
\g{(K_1K_2)}.
\end{align*}
Thus $\g {(K_1)} \g {(K_2)}=\g{(K_1K_2)}$.
\end{proof}

\begin{corollary}\label{C3.7}
Let $K_i/k$, $i=1,2$ be two finite abelian $p$--extensions. Then
$$
\g {(K_1)} \g {(K_2)}=\g{(K_1K_2)}. \eqno{\fin}
$$
\end{corollary}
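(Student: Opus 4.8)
The plan is to deduce Corollary \ref{C3.7} directly from Corollary \ref{C3.6}, so the proof is essentially a one-line reduction. First I would observe that a finite abelian $p$--extension $K_i/k$ has degree $[K_i:k]=p^{a_i}$ for some $a_i\geq 0$, and that $\gcd(p^{a_i},q-1)=1$ since $q$ is a power of $p$, so $p\nmid q-1$. Hence each $K_i/k$ is a finite abelian extension of degree relatively prime to $q-1$, and Corollary \ref{C3.6} applies verbatim to give $\g {(K_1)} \g {(K_2)}=\g{(K_1K_2)}$.

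The only point requiring a word of justification is that $K_1K_2$ is again a finite abelian $p$--extension — but this is not even needed for the statement, since Corollary \ref{C3.6} only requires $K_1/k$ and $K_2/k$ themselves to have degree prime to $q-1$. Still, if one wants the composite to be in the same class, it follows from the injection $\Gal(K_1K_2/k)\hookrightarrow \Gal(K_1/k)\times\Gal(K_2/k)$, which shows $[K_1K_2:k]$ divides $p^{a_1+a_2}$, exactly as was argued just before Theorem \ref{T2.2} in the excerpt.

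There is essentially no obstacle here: the corollary is a specialization of the preceding one. The proof I would write is simply:

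\begin{proof}
Since $K_i/k$ is a finite abelian $p$--extension, $[K_i:k]$ is a power of $p$, hence relatively prime to $q-1$ because $p\nmid q-1$. The result is then the special case of Corollary \ref{C3.6}.
\end{proof}

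The entire content lies in Corollaries \ref{C3.5} and \ref{C3.6}, whose proofs in turn rest on Proposition \ref{P3.4} (multiplicativity of the prime-by-prime construction $\g{(-)}'$ for subfields of a fixed cyclotomic function field) together with the observation that for extensions of degree prime to $q-1$ the decomposition group of $\p$ is trivial, so that $\g{(K_i)}=\g{(K_i)}'$ via \eqref{Eq3.1}; thus there is nothing further to prove for Corollary \ref{C3.7} beyond citing these.
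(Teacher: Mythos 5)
Your reduction is exactly the paper's (implicit) argument: Corollary \ref{C3.7} is stated with no separate proof precisely because a finite abelian $p$--extension has degree a power of $p$, hence coprime to $q-1$, so Corollary \ref{C3.6} applies verbatim. The proposal is correct and takes the same route as the paper.
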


As a consequence we obtain the description of the genus field
of a finite abelian $p$--extension of $k$.

\begin{corollary}\label{C3.8} Let $K/k$ be a finite 
abelian $p$--extension with Galois group $\Gal(K/k)=G
\cong G_1\times\cdots\times G_s$ with $G_i\cong
{\ma Z}/p^{\alpha_i}{\ma Z}$, $1\leq i\leq s$. Let $K$
be the composite $K=K_1\cdots K_s$ such that
$\Gal(K_i/k)\cong G_i$. Let $P_1,\ldots, P_r$
be the finite primes ramified in $K/k$.
Let $K_i=k(\vec w_i)$ be given
by the equation
\begin{gather*}
\vec w_i^p\Witt - \vec w_i=\vec \xi_i,\quad 1\leq i\leq s.
\intertext{Write each $\vec\xi_i$ as in {\rm{(\ref{Eq3.0})}} that is,}
\vec \xi_i={\vec\delta}_{i,1}
\Witt + \cdots \Witt + {\vec\delta}_{i,r} \Witt + \vec\gamma_i,
\intertext{such that all the components of $\vec\delta_{i,j}$ are written
so that the degree of the numerator is less than the degree
of the denominator, the support of
the denominator is at most $\{P_j\}$ and the components
of $\vec\gamma_i$ are polynomials. Let}
\vec w_{i,j}^p\Witt -\vec w_{i,j}=\vec\delta_{i,j},\quad
1\leq i\leq s,\quad 1\leq j\leq r
\intertext{and}
\vec z_i^p\Witt -\vec z_i=\vec \gamma_i, \quad 1\leq i\leq s.
\intertext{Then}
\g K=k\big(\vec w_{i,j},\vec z_i\mid 1\leq i\leq s, 1\leq j\leq r\big).
\end{gather*}
\end{corollary}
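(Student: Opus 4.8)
The plan is to reduce Corollary \ref{C3.8} to Theorem \ref{T3.1} by means of the multiplicativity result Corollary \ref{C3.7}. First I would observe that each individual extension $K_i/k$ is a finite abelian $p$-extension given by a single Witt equation $\vec w_i^p \Witt - \vec w_i = \vec\xi_i$, so Theorem \ref{T3.1} (applied with $K_i$ in place of $K$, using the decomposition of $\vec\xi_i$ furnished in the statement) yields directly
\[
\g{(K_i)} = k\big(\vec w_{i,j}, \vec z_i \mid 1\leq j\leq r\big),\quad 1\leq i\leq s,
\]
where $\vec w_{i,j}^p \Witt - \vec w_{i,j} = \vec\delta_{i,j}$ and $\vec z_i^p \Witt - \vec z_i = \vec\gamma_i$. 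Here one should check that the hypotheses of Theorem \ref{T3.1} (or rather Remark \ref{R3.2}(b), which allows non-reduced data as long as each auxiliary extension has at most one ramified finite prime) are met: by construction the denominator of each component of $\vec\delta_{i,j}$ is supported at most on $\{P_j\}$, so in $k(\vec w_{i,j})/k$ the only possibly ramified finite prime is $P_j$, and in $k(\vec z_i)/k$ only $\p$ can ramify. This is the content of Remarks \ref{R3.2}(b), so no further work is needed.

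Next I would apply Corollary \ref{C3.7} repeatedly. Since $K = K_1\cdots K_s$ is a composite of finite abelian $p$-extensions of $k$, an induction on $s$ using Corollary \ref{C3.7} gives
\[
\g K = \g{(K_1)}\cdots\g{(K_s)}.
\]
Substituting the expressions for the $\g{(K_i)}$ obtained above, the composite on the right is the field generated over $k$ by all the $\vec w_{i,j}$ and all the $\vec z_i$, i.e.
\[
\g K = k\big(\vec w_{i,j},\vec z_i \mid 1\leq i\leq s,\ 1\leq j\leq r\big),
\]
which is precisely the claimed identity.

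\textbf{Where the work lies.} The only real subtlety, and the step I would treat most carefully, is the verification that Theorem \ref{T3.1} applies verbatim to each $K_i$ with the given (possibly non-reduced) decomposition of $\vec\xi_i$. The theorem as stated in Section \ref{S3} assumes the reduced form of \cite[Theorem 8.10]{BaJaRzVi2016}; the hypotheses of Corollary \ref{C3.8} only demand that each $\vec\delta_{i,j}$ have denominator supported in $\{P_j\}$ and each $\vec\gamma_i$ be polynomial, which is weaker. Remark \ref{R3.2}(b) is exactly what bridges this gap — it records that the proof of Theorem \ref{T3.1} only uses that each auxiliary $p$-extension $k(\vec w_{i,j})/k$ ramifies at no more than one finite prime and $k(\vec z_i)/k$ ramifies only at $\p$ — so the argument goes through once that remark is invoked. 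After that, the appeal to Corollary \ref{C3.7} and the bookkeeping of generators is routine.
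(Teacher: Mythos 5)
Your proposal is correct and follows essentially the same route as the paper, whose proof is the one-line citation of Remarks \ref{R3.2}(b), Corollary \ref{C3.3}(b) (the $u=1$ instance of Theorem \ref{T3.1} that you invoke), and Corollary \ref{C3.7}. You have simply written out the details, correctly identifying Remark \ref{R3.2}(b) as the step that licenses applying Theorem \ref{T3.1} to the possibly non-reduced decompositions of the $\vec\xi_i$.
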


\begin{proof}
It is a consequence of Remarks \ref{R3.2} (b), Corollary \ref{C3.3} (b) and
Corollary \ref{C3.7}.
\end{proof}

Next, we consider a cyclic extension $K/k$ of degree $t$ such that
$\gcd(t,p(q-1))=1$. We have that $E=KM\cap k(\Lambda_N)$ satisfies that
$[E:k]$ is relatively prime to $q-1$. Hence $\g {E'}=\g E$ and $\g K=
\g E K$. Thus, we have to describe $\g E$.

\begin{proposition}\label{P3.9} Let $E\subseteq k(\Lambda_N)$ be
a cyclic extension of $k$ of degree $t$ relatively prime to $p(q-1)$.
Let $P_1,\ldots,P_r\in R_T^+$ be the primes in $k$ ramifying in $E$.
Then 
\[
\g E=\prod_{j=1}^r F_j,
\]
where $k\subseteq F_j\subseteq k(\Lambda_{P_j})$ is the subfield
of degree $a_j$ over $k$, $a_j$ is the order of $\chi_{P_j}$,
and $\chi$ is the character associated to $E$.
\end{proposition}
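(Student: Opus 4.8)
The plan is to identify the group of Dirichlet characters attached to $\g E$ and then to exhibit the field it cuts out. Let $X=\langle\chi\rangle$ be the cyclic group of Dirichlet characters associated to $E$, so $|X|=t$ and $\gcd(t,p(q-1))=1$. By Remark \ref{R4.0} we know $\g E=(\g{E'})^{\mc D}$, where $\g{E'}$ is the field associated to $\prod_{P\in R_T^+}X_P$ and $\mc D$ is the decomposition group of any prime in $\S E$ in $\g{E'}/E$; but since $\gcd(t,q-1)=1$ the inertia group $I_\infty(k(\Lambda_N)/k)\cong C_{q-1}$ meets $X$ (and the larger group $\prod_P X_P$, which still has order dividing a power of $t$ times nothing new) trivially, so $\p$ is unramified and in fact splits completely in $\g{E'}/k$; hence $\mc D=\{1\}$ and $\g E=\g{E'}$ is exactly the field associated to $Y:=\prod_{P\in R_T^+}X_P$. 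This is the same reduction used in Remark \ref{R5.1} and Corollary \ref{C3.5}.

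Next I would pin down each local factor $X_{P_j}$. Since $E$ is ramified precisely at $P_1,\dots,P_r$, a prime $P$ contributes a nontrivial $X_P$ only when $P=P_j$ for some $j$, and then $X_{P_j}=\langle\chi_{P_j}\rangle$ is cyclic of order $a_j$, the order of $\chi_{P_j}$; thus $Y=\prod_{j=1}^r X_{P_j}=\prod_{j=1}^r\langle\chi_{P_j}\rangle$. The conductor of $\chi_{P_j}$ is a power of $P_j$, and because $\gcd(a_j,q-1)=\gcd(a_j,p)=1$ (as $a_j\mid t$), the conductor is forced to be exactly $P_j$ itself: the group of Dirichlet characters ramified only at $P_j$ with conductor $P_j$ is $\Gal(k(\Lambda_{P_j})/k)^{\widehat{\ }}\cong C_{q^{\deg P_j}-1}$, and a character of order prime to $p$ and coprime to $q-1$ inside a higher level $k(\Lambda_{P_j^\alpha})$ with $\alpha\ge 2$ would have to be inflated from level $1$ since the kernel of $\Gal(k(\Lambda_{P_j^\alpha})/k)\to\Gal(k(\Lambda_{P_j})/k)$ is a $p$-group. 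Therefore the field $F_j$ associated to $\langle\chi_{P_j}\rangle$ is the unique subfield of $k(\Lambda_{P_j})$ of degree $a_j$ over $k$, as claimed.

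Finally, since $Y$ is the internal direct product of the $X_{P_j}=\widehat{\Gal(F_j/k)}$ and these have pairwise coprime conductors $P_1,\dots,P_r$, the field associated to $Y$ is the compositum $F_1\cdots F_r=\prod_{j=1}^r F_j$ (the $F_j$ are linearly disjoint over $k$ because their conductors, hence their ramification loci, are disjoint). Combining with $\g E=\g{E'}=$ (field of $Y$) gives $\g E=\prod_{j=1}^r F_j$. The only point requiring genuine care—the main obstacle—is the conductor computation: justifying that each $\chi_{P_j}$ has conductor exactly $P_j$ rather than a proper power, which is where the hypothesis $\gcd(t,p)=1$ is essential, since otherwise wild ramification at $P_j$ could push the conductor up and $F_j$ would be a subfield of a higher-level cyclotomic field instead.
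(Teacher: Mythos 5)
Your proposal is correct and follows essentially the same route as the paper: reduce to the field cut out by $\prod_{j}X_{P_j}$ using that the decomposition group of $\p$ is trivial when the degree is prime to $q-1$, then identify the field of $\langle\chi_{P_j}\rangle$ as the degree-$a_j$ subfield of $k(\Lambda_{P_j})$. The only difference is that you spell out the conductor computation (that $\gcd(a_j,p)=1$ forces the conductor of $\chi_{P_j}$ to be exactly $P_j$, since the kernel of $\Gal(k(\Lambda_{P_j^{\alpha}})/k)\to\Gal(k(\Lambda_{P_j})/k)$ is a $p$--group), which the paper leaves implicit.
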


\begin{proof}
It follows from the fact that $X=\langle \chi\rangle$ is the group
of Dirichlet characters associated to $E$, $\g E$ is the field
corresponding to $\prod_{j=1}^r X_{P_j}$, $X_{P_j}=
\langle\chi_{P_j}\rangle$ and $F_j$ is the field associated to
$\chi_{P_j}$.
\end{proof}

We have our final main result.

\begin{theorem}\label{T3.10} Let $K/k$ be an abelian extension of degree
$t$ with $\gcd (t,q-1)=1$. Let $P_1,\ldots, P_r\in R_T^+$ be the
primes in $k$ ramifying in $K$.
Let $E=KM\cap k(\Lambda_N)=E_0E_1\cdots E_s$
where $E_i/k$ is a cyclic extension of degree $t_i$, $\gcd(t_i,p(q-1))=1$,
$1\leq i\leq s$ and $E_0/k$ is an abelian $p$--extension. 
Then 
\[
\g K=\g E K,\quad\text{where}\quad
\g E=\g {(E_0)}\g {(E_1)}\cdots \g {(E_s)},
\]
$\g {(E_0)}$ is given by Corollary {\rm{\ref{C3.8}}} and $\g {(E_i)}=
\prod_{j=1}^r F_{i,j}$ is given by
Proposition {\rm{\ref{P3.9}}}, $1\leq i\leq s$. 

Furthermore, let $b_{i,j}:=[F_{i,j}:k]$. Then $F_j:=\prod_{i=1}^s F_{i,j}$
is the subfield of $k(\Lambda_{P_j})$ of degree $b_j:=\lcm[b_{i,j},
1\leq i\leq s]$ over $k$. We have
$$
\g K=\g{(E_0)}\Big(\prod_{j=1}^r F_j\Big) K. \eqno{\fin}
$$
\end{theorem}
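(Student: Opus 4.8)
\textbf{Proof plan for Theorem \ref{T3.10}.}
The plan is to assemble the statement from the machinery already established, proceeding in three stages: reducing $\g K$ to $\g E K$, factoring $\g E$ over the cyclic and $p$-parts of $E$, and finally repackaging the cyclic contributions prime-by-prime. First I would invoke Remark \ref{R5.1}: since $\gcd(t,q-1)=1$ and $[E:k]\mid t$ by \eqref{Eq4}, the number $d=\f{EK/K}$ of Theorem \ref{T2.1} divides both $q-1$ and a power of the primes dividing $t$, hence $d=1$, so $H=\{1\}$ and $\g K=\g E K$. This also shows $\g E=\g{E'}$ by \eqref{Eq3.1}, since the decomposition group of $\p$ in $E/k$ is trivial, so we are free to work with the cyclotomic description of the genus field throughout.

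Next I would justify the decomposition $E=E_0E_1\cdots E_s$ with $E_0/k$ a $p$-extension and each $E_i/k$ cyclic of degree $t_i$ relatively prime to $p(q-1)$: this is just the primary decomposition of the finite abelian group $\Gal(E/k)$, grouping the $p$-part into $E_0$ and each remaining primary cyclic factor into an $E_i$; the hypothesis $\gcd(t,q-1)=1$ guarantees each $t_i$ is prime to $q-1$ and, being a prime power dividing $t$, prime to $p$ unless it is the $p$-part — so the non-$p$ factors satisfy $\gcd(t_i,p(q-1))=1$ as required by Proposition \ref{P3.9}. Now apply Corollary \ref{C3.7} (for the $p$-extension $E_0$ against the others, after noting $E_0$ and $E_1\cdots E_s$ are both $p$- resp. prime-to-$p$ and their compositum behaves well) together with repeated use of Corollary \ref{C3.5} (or \ref{C3.6}) among the $E_i$, all of whose degrees are prime to $q-1$, to conclude
\[
\g E=\g{(E_0)}\g{(E_1)}\cdots\g{(E_s)}.
\]
Then $\g{(E_0)}$ is described explicitly by Corollary \ref{C3.8} and each $\g{(E_i)}=\prod_{j=1}^r F_{i,j}$ by Proposition \ref{P3.9}, where $F_{i,j}\subseteq k(\Lambda_{P_j})$ has degree $b_{i,j}=[F_{i,j}:k]$ equal to the order of $\chi^{(i)}_{P_j}$ for $\chi^{(i)}$ the character of $E_i$.

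Finally I would repackage the cyclic part prime-by-prime. For each ramified prime $P_j$, the fields $F_{i,j}$, $1\le i\le s$, all lie inside the cyclic extension $k(\Lambda_{P_j})/k$ of degree prime to $p$; their compositum $F_j:=\prod_{i=1}^s F_{i,j}$ is therefore the unique subfield of $k(\Lambda_{P_j})$ of degree $\lcm[b_{i,j}:1\le i\le s]=:b_j$ over $k$, since in a cyclic group the subgroup generated by subgroups of orders $b_{i,j}$ has order their least common multiple. Regrouping the double product $\prod_i\prod_j F_{i,j}$ as $\prod_j\prod_i F_{i,j}=\prod_j F_j$ and substituting into $\g K=\g E K=\g{(E_0)}\bigl(\prod_{i=1}^s\prod_{j=1}^r F_{i,j}\bigr)K$ yields $\g K=\g{(E_0)}\bigl(\prod_{j=1}^r F_j\bigr)K$. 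The only genuinely delicate point is verifying $E=E_0E_1\cdots E_s$ with the stated degree constraints and that the compositum hypotheses of Corollaries \ref{C3.5} and \ref{C3.7} are met when chaining them — i.e. that intersecting primary components keeps degrees coprime to $q-1$ and that $\g$ distributes over the full compositum and not merely pairwise; this follows by induction on $s$ using associativity of the compositum and Corollary \ref{C3.6}, but it should be spelled out rather than left implicit.
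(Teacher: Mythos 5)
Your proposal is correct and follows exactly the route the paper intends: the paper states Theorem \ref{T3.10} with no written proof, treating it as immediate from Remark \ref{R5.1} (giving $\g K=\g E K$), the primary decomposition of $\Gal(E/k)$, Corollaries \ref{C3.6}--\ref{C3.8}, Proposition \ref{P3.9}, and the lcm fact for subfields of the cyclic extension $k(\Lambda_{P_j})/k$ --- all of which you assemble in the same order. Your added care about chaining the compositum corollaries by induction on $s$ is a reasonable filling-in of what the paper leaves implicit.
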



\begin{thebibliography}{xx}

\bibitem{AnJa2000} Angl\`es, Bruno; Jaulent, Jean--Fran\c{c}ois,
\textit{Th\'eorie des genres des corps globaux}, 
Manuscripta Math. \textbf{101}, no. 4, 513--532, (2000).

\bibitem{ArTa67} Artin, Emil; Tate John, \textit{Class field theory},
Benjamin, New York, 1967.

\bibitem{BaKo96} Bae, Sunghan; Koo, Ja Kyung,
\textit{Genus theory for function fields},
J. Austral. Math. Soc. Ser. A \textbf{60}, no. 3, 301--310, (1996).

\bibitem{BaJaRzVi2016} Barreto--Casta\~neda, Jonny Fernando;
Jarqu\'in--Z\'arate, Fausto; Rzedowski--Calder\'on, Martha \& Villa--Salvador,
Gabriel, \textit{Abelian $p$--extensions and additive polynomials},
{\tt https://arxiv.org} {\tt /pdf/1606.02354.pdf}.

\bibitem{BaRzVi2013} Bautista--Ancona, V{\'\i}ctor; Rzedowski--Calder\'on,
Martha; Villa--Salvador, Gabriel, \textit{Genus fields of cyclic $l$--extensions
of rational function fields}, International Journal of Number Theory
{\bf 9}, no. 5, 1249--1262, (2013).

\bibitem{Cle92} Clement, Rosario, \textit{The
genus field of an algebraic function field},
J. Number Theory {\bf 40}, no. 3, 359--375, (1992).

\bibitem{Fro59-1} Fr\"ohlich, Albrecht, \textit{The genus field and genus
group in finite number fields}, Mathematika {\bf 6}, 40--46, (1959).

\bibitem{Fro59-2} Fr\"ohlich, Albrecht, \textit{The genus field and genus
group in finite number fields, II}, Mathematika {\bf 6}, 142--146, (1959).

\bibitem{Fro83} Fr\"ohlich, Albrecht, \textit{Central extensions,
Galois groups and ideal class
groups of number fields}, Contemporary Mathematics, \textbf{24}, American
Mathematical Society, Providence, RI, 1983.

\bibitem{Gau1801} Gauss, Carl Friedrich, \textit{Disquisitiones arithmeticae},
1801.

\bibitem{GarSti91} Garcia, Arnaldo \& Stichtenoth, Henning, \textit{Elementary
Abelian $p$--Extensions of Algebraic Function Fields}, manuscripta math.
\textbf{72}, 67--79 (1991).

\bibitem{Has51} Hasse, Helmut, \textit{Zur Geschlechtertheorie in quadratischen
Zahlk\"orpern}, J. Math. Soc. Japan \textbf{3}, 45--51, (1951).

\bibitem{Hay74} Hayes, David, Explicit Class Field Theory for Rational Function
Fields, Trans. Amer. Math. Soc. 189 (1974) 77--91.

\bibitem{HuLi2010} Hu, Su; Li, Yan, \textit{The 
genus fields of Artin--Schreier extensions},
Finite Fields Appl. \textbf{16}, no. 4, 255--264,  (2010).

\bibitem{Ish76} Ishida, Makoto, \textit{The genus fields of algebraic number fields},
Lecture Notes in Mathematics, Vol. \textbf{555}, Springer-Verlag, Berlin-New York, 1976.

\bibitem{Kan85} Kani, Ernst, \textit{Relations between the 
genera and between the Hasse-Witt invariants of Galois 
coverings of curves}, 
Canad. Math. Bull. \textbf{28}, 321--327, (1985), no. 3.

\bibitem{Lau91} Lachaud, Gilles, \textit{ Artin--Schreier curves, 
exponential sums, and the Carlitz-Uchiyama bound for geometric codes},
J. Number Theory \textbf{39}, 18--40, (1991), no. 1.

\bibitem{Leo53} Leopoldt, Heinrich W., 
\textit{Zur Geschlechtertheorie in abelschen
Zahlk\"orpern}, Math. Nachr. \textbf{9}, 351--362, (1953).

\bibitem{MaRzVi2013} Maldonado--Ram{\'\i}rez, M.; Rzedowski--Calder\'on,
M.; Villa--Salvador, G., \textit{Genus Fields of Abelian Extensions of Congruence
Rational Function Fields}, Finite Fields Appl. \textbf{20}, 40--54 (2013).

\bibitem{MaRzVi2015} Maldonado--Ram{\'\i}rez, M.; Rzedowski--Calder\'on,
M.; Villa--Salvador, G., \textit{Corrigendum to ÒGenus fields
of abelian extensions of rational congruence function fieldsÓ
[Finite Fields Appl. {\bf 20} (2013) 40--54]}, 
Finite Fields Appl. \textbf{20}, 283--285 (2015).

\bibitem{MaRzVi2016} Maldonado--Ram\'irez, Myriam.; Rzedowski--Calder\'on, 
Martha.; Villa--Salvador, G., \textit{Genus fields of congruence function fields},
Finite Fields Appl. \textbf{44}, 56--75 (2017).

\bibitem{Ore33} Ore, Oystein, \textit{On a special class of polynomials},
Trans. Amer. Math. Soc. \textbf{35}, 559--584 (1933), no. 3.

\bibitem{Pen2003} Peng, Guohua, \textit{The genus
fields of Kummer function fields},
J. Number Theory \textbf{98} , no. 2, 221--227, (2003).

\bibitem{Ros87} Rosen, Michael, \textit{The Hilbert
class field in function fields},
Exposition. Math. \textbf{5}, no. 4, 365--378, (1987).

\bibitem{Sch36} Schmid, Hermann Ludwig, Zur Arithmetik
der zyklischen p-K\"orper, J. Reine Angew. Math. 
176 (1936) 161--167.

\bibitem{Vil2006} Villa Salvador, Gabriel Daniel, \textit{Topics in the theory of
algebraic function fields}, Mathematics: Theory \& Applications. Birkh\"auser Boston,
Inc., Boston, MA, 2006.

\bibitem{Wit2007} Wittmann, Christian, \textit{$l$--class groups
of cyclic function fields of degree $l$}, 
Finite Fields Appl. \textbf{13}, no. 2, 327--347, (2007).

\bibitem{Xia85} Zhang, Xianke, A simple construction of genus fields of
abelian number fields, Proc. Amer. Math. Soc. 94, no. 3 (1985) 393--395.

\end{thebibliography}
\end{document}